\documentclass[12pt]{article}
\usepackage{amsmath,amsthm,amssymb,times,titlesec}
\usepackage{graphicx}
\usepackage{pdfsync}
\usepackage{float}
\usepackage[utf8]{inputenc}
\numberwithin{equation}{section}

\oddsidemargin 1pt
\evensidemargin 1pt
\marginparwidth 30pt 
\topmargin 1pt       
\headheight 1pt      
\headsep 1pt         
\footskip 24pt       
\textheight 
            650pt
\textwidth 
           460pt

\titleformat{\subsubsection}
{\normalfont\fontsize{13}{14}\selectfont\itshape}{\thesubsubsection}{1em}{}


\newcommand\scaleddot{\scalebox{.89}{.}}
\makeatletter
\renewcommand{\ddddot}[1]{%
  {\mathop{\kern\z@#1}\limits^{\makebox[0pt][c]{\vbox to-2\ex@{\kern-\tw@\ex@\hbox{\normalfont\scaleddot\kern-0.5pt\scaleddot\kern-0.5pt\scaleddot\kern-0.5pt\scaleddot}\vss}}}}}

\newcommand{\abs}[1]{\lvert#1\rvert}
\newcommand\norm[1]{\left\lVert#1\right\rVert}

\newcommand{\FF}{{\mathcal F}}

\newtheorem{theorem}{Theorem}[section]
\newtheorem{lemma}[theorem]{Lemma}
\newtheorem{proposition}[theorem]{Proposition}

\theoremstyle{definition}

\title{Extended lifespan of the fractional BBM equation}

\author{Dag Nilsson \thanks{The author was supported by an ERCIM `Alain Bensoussan' Fellowship and by grant no. 250070 from the Research Council of Norway}}
\date{}
\begin{document}

\maketitle
\begin{abstract}
For $0<\alpha<1$ and with initial data $\norm{u_0}_{H^{N+\frac{\alpha}{2}}}=\varepsilon$, sufficently small, we show that the existence time for solutions of the fractional BBM equation $\partial_tu+\partial_xu+u\partial_xu+\abs{\mathrm{D}}^\alpha\partial_tu=0$, can be extended beyond the hyperbolic existence time $\frac{1}{\varepsilon}$, to $\frac{1}{\varepsilon^2}$. For the proof we use a modified energy, based on a normal form transformation as in [Hunter, Ifrim, Tataru, Wong, 2015]. In addition we employ ideas and techniques from [Ehrnström, Wang, 2018], in which the authors obtain an enhanced existence time for the fractional KdV equation.
\end{abstract}

\section{Introduction}
We consider the fractional BBM equation
\begin{equation}\label{frac-bbm-eq}
\begin{cases}
&\partial_tu+\partial_xu+u\partial_xu+\abs{\mathrm{D}}^\alpha\partial_tu=0,\\
&u(0,x)=u_0(x),
\end{cases}
\end{equation}
with $\FF(\abs{\mathrm{D}}^\alpha f)(\xi)=\abs{\xi}^\alpha\hat{f}(\xi)$, and where $\mathcal{F}$ is the Fourier transform 
\begin{equation*}
\mathcal{F}(f)(\xi)=\int_\mathbb{R}f(x)\exp(-\mathrm{i}x\xi)\ \mathrm{d}x.
\end{equation*}
Throughout the text we will write $f\lesssim g$, when $\frac{f}{g}$ is uniformly bounded from above, and $f\simeq g$ when $f\lesssim g\lesssim f$.

When $\alpha=2$, \eqref{frac-bbm-eq} is the classical BBM equation introduced in \cite{bbm1972}. In \cite{bonatzvetkov2009} it was shown that the BBM equation is globally well-posed in $H^s(\mathbb{R})$, for $s\geq 0$. This was later generalized in \cite{Bona2009} where the authors showed that \eqref{frac-bbm-eq}, with $1<\alpha\leq 2$, is globally well-posed in $H^s(\mathbb{R})$, for $s\geq 1-\frac{\alpha}{2}$. On the other hand, in \cite{CP2014} it was shown that \eqref{frac-bbm-eq}, with $1<\alpha\leq 2$ is ill-posed in $H^s(\mathbb{R})$ for $s<\max\{0,\frac{3}{2}-\alpha\}$, in the sense that the map $u_0\mapsto u$ is not $C^2$ from $H^s(\mathbb{R})$ to $C([0,T];H^s(\mathbb{R}))$, for any $T>0$. The gap in the theory when $\max\{0,\frac{3}{2}-\alpha\}\leq s<1-\frac{\alpha}{2}$ and $1<\alpha\leq 2$ was filled recently in \cite{Wang2018}, where it was shown that \eqref{frac-bbm-eq} is indeed globally well-posed for such values of $s$. 

For $\alpha=1$, \eqref{frac-bbm-eq} is the regularized Benjamin--Ono equation, which was shown in \cite{bonakalish2000} to be locally well-posed in $H^s(\mathbb{R})$, $s>\frac{1}{2}$, and globally well-posed in $H^s(\mathbb{R})$, $s\geq \frac{3}{2}$.  

When $\alpha\geq 1$, \eqref{frac-bbm-eq} is an ODE in $H^s(\mathbb{R})$, $s>\frac{1}{2}$. This is no longer true when $0<\alpha<1$, which makes this case more difficult. Also, the case $0<\alpha<1$ is interesting from a physical point of view, since $\alpha=\frac{1}{2}$ corresponds to full-dispersion models for gravity waves. To the authors knowledge, the only result on well-posedness of \eqref{frac-bbm-eq} for $0<\alpha<1$ is by Linares, Pilod and Saut \cite{LPS2014}, where they showed using standard energy methods that \eqref{frac-bbm-eq} is locally well-posed in $H^s(\mathbb{R})$, for $s>\frac{3}{2}-\alpha$, obtaining the estimate
\begin{equation}\label{standard-en-est}
\frac{\mathrm{d}}{\mathrm{d}t}\norm{u(t)}_{H^s}^2\lesssim \norm{u(t)}_{H^s}^3,
\end{equation}
which implies that the maximal existence time $T$ for a classical solution of \eqref{frac-bbm-eq}, satisfies $T\gtrsim \frac{1}{\norm{u_0}_{H^s}}$. It is unclear for which values of $\alpha\in(0,1)$ that \eqref{frac-bbm-eq} is expected to be well-posed. Indeed, equation \eqref{frac-bbm-eq} is not invariant under any rescaling $u_\lambda(x,t)=\lambda^au(\lambda^bx,\lambda^ct)$, except for $\alpha=0$, hence is not scaling critical for any $\alpha\in(0,1)$. This is in contrast with the fractional KdV equation which is scaling critical for $\alpha=\frac{1}{2}$, and this value of $\alpha$ is also believed to be critical for the well-posedness theory, as pointed out in \cite{LPS2014}.     

As described in the above paragraph, the well-posedness of \eqref{frac-bbm-eq}, both local and global, is quite well understood for $\alpha\geq 1$, while for $0<\alpha<1$ the question of global well-posedness is completely open. In the present work we therefore consider the question of long time existence, and show that \cite[Theorem 4.12]{LPS2014} can be expanded upon, by extending the lifespan of the solutions.
\begin{theorem}\label{main-res}
Let $0<\alpha<1$ and $N\geq 2$. There exists $\varepsilon_0>0$ such that for any initial data $u_0$ satisfying
\begin{equation*}
\norm{u_0}_{H^{N+\frac{\alpha}{2}}}\leq \varepsilon,
\end{equation*}
with $\varepsilon\leq \varepsilon_0$, there exist $T\gtrsim\frac{1}{\varepsilon^2}$ and a unique solution $u\in C([0,T];H^{N+\frac{\alpha}{2}}(\mathbb{R}))$ of \eqref{frac-bbm-eq} with $u(0,x)=u_0(x)$ such that 
\begin{equation*}
\norm{u}_{C([0,T];H^{N+\frac{\alpha}{2}}(\mathbb{R}))}\lesssim\norm{u_0}_{H^{N+\frac{\alpha}{2}}}.
\end{equation*}
\end{theorem}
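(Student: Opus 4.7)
The strategy is to augment the natural $H^{N+\alpha/2}$ energy by a cubic correction so that the right-hand side of the standard energy inequality \eqref{standard-en-est} gets replaced by a quartic bound, following the modified-energy / normal-form technique of Hunter, Ifrim, Tataru and Wong and the related treatment of the fractional KdV equation by Ehrnström and Wang. Combined with the local well-posedness from \cite{LPS2014} and a standard continuity argument, such a quartic bound upgrades the hyperbolic lifespan $\varepsilon^{-1}$ to $\varepsilon^{-2}$.

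Concretely, I rewrite \eqref{frac-bbm-eq} as $\partial_t u + (1+|D|^\alpha)^{-1}\partial_x u = -(1+|D|^\alpha)^{-1}(u\partial_x u)$, with linear dispersion $\omega(\xi) = \xi/(1+|\xi|^\alpha)$, and work with the linearly conserved, coercive energy
\[
E_N(u) := \sum_{k=0}^{N} \Big( \|\partial_x^k u\|_{L^2}^2 + \| \, |D|^{\alpha/2}\partial_x^k u \, \|_{L^2}^2 \Big) \simeq \|u\|_{H^{N+\alpha/2}}^2.
\]
Differentiating $E_N$ along a solution, integrating by parts, and controlling the lower order Leibniz terms in $\partial_x^k(u\partial_x u)$ by the tame estimates of \cite{LPS2014} reduces the nonlinear remainder to a symmetric trilinear form
\[
C(u,u,u) = \int_{\xi_1+\xi_2+\xi_3=0} c(\xi_1,\xi_2,\xi_3)\,\widehat u(\xi_1)\widehat u(\xi_2)\widehat u(\xi_3)\,d\sigma
\]
with an explicit, essentially polynomial symbol $c$. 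I then introduce the cubic correction
\[
Q(u) := \int_{\xi_1+\xi_2+\xi_3=0} \frac{c(\xi_1,\xi_2,\xi_3)}{\Phi(\xi_1,\xi_2,\xi_3)}\,\widehat u(\xi_1)\widehat u(\xi_2)\widehat u(\xi_3)\,d\sigma,
\]
where $\Phi(\xi_1,\xi_2,\xi_3) := \omega(\xi_1)+\omega(\xi_2)+\omega(\xi_3)$ is the three-wave resonance function; by construction the linear contribution to $\frac{d}{dt}Q(u)$ cancels $C(u,u,u)$ exactly, leaving only quartic-in-$u$ contributions from the nonlinear part of $\partial_t u$. Since $\omega$ is odd and strictly concave on $[0,\infty)$, $\Phi$ does not vanish on the interaction manifold outside the trivial locus $\xi_1\xi_2\xi_3 = 0$, so $q := c/\Phi$ is well defined off that set, and if one also proves the coercivity bound $|Q(u)|\lesssim \|u\|_{H^{N+\alpha/2}}^3$, then $\widetilde E_N := E_N + Q$ is equivalent to $\|u\|_{H^{N+\alpha/2}}^2$ for small $u$ and satisfies $\tfrac{d}{dt}\widetilde E_N(u) \lesssim \|u\|_{H^{N+\alpha/2}}^4$, which is exactly the bound needed to close the bootstrap on $[0,\varepsilon^{-2}]$.

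The main obstacle I anticipate is obtaining dyadic bounds on the multiplier $q = c/\Phi$ sharp enough to deliver both the coercivity estimate on $Q$ and the quartic bound on $\tfrac{d}{dt}\widetilde E_N$. Unlike fractional KdV, whose symbol is homogeneous and whose resonance function scales cleanly, the BBM dispersion $\omega(\xi)$ interpolates between $\sim\xi$ at low frequencies and $\sim\operatorname{sgn}(\xi)|\xi|^{1-\alpha}$ at high frequencies, so $\Phi$ is genuinely non-homogeneous and a separate analysis of low-low, high-low and high-high interaction regimes is required. The most delicate case is high-high interactions with $|\xi_1|\simeq|\xi_2|\gg 1$ and $|\xi_1+\xi_2|$ small, where $\Phi$ degenerates and a potential derivative loss in $q$ must be absorbed by the $|D|^{\alpha/2}$ weight present in $E_N$. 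Adapting the dyadic / paradifferential analysis of Ehrnström and Wang for the fractional KdV equation to the non-homogeneous BBM setting, and checking that the resulting estimates close the required bounds uniformly in the dyadic scale, is where I expect the technical heart of the proof to lie.
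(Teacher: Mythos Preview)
Your outline is the same strategy the paper uses: the bilinear symbol $m$ in \eqref{formula-m} is, up to harmless factors, exactly your $c/\Phi$, and the modified energy \eqref{modified-energy} is your $E_N+Q$. The non-resonance of $\Phi$ away from $\xi_1\xi_2\xi_3=0$ is encoded in Proposition~\ref{prop-m}, and the coercivity of $Q$ is \eqref{equivalence}.

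There is, however, one point where your description would lead you astray. You write that the worst-case derivative loss in $q$ ``must be absorbed by the $|D|^{\alpha/2}$ weight present in $E_N$.'' For the \emph{coercivity} of $Q$ this is essentially right, but for the \emph{quartic} bound on $\tfrac{d}{dt}\widetilde E_N$ it is not. After time-differentiation the two dominant quartic pieces (the paper's $F_{1,0}$ and $G_{1,0}$) each lose a full derivative: direct use of the size of $m$ gives only $\|u\|_{H^{N+\alpha/2}}^2\|u\|_{H^{k+1+\alpha/2}}^2$, and since $\alpha<1$ no weight of order $\alpha/2$ can close a gap of order $1$. What actually saves the estimate is a structural cancellation \emph{between} the two bad pieces. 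Using the functional relation \eqref{functional-rel} one shows $G_{1,0}\equiv 2F_{1,0}$ modulo acceptable terms, and then on the region where the frequencies $\xi,\eta,\sigma$ are all comparable and large (the paper's $\mathcal{A}_2^c$, which is the near-diagonal region, not your high-high-to-low configuration) the sum $2F_{1,0}+G_{1,0}$ is a genuine commutator whose symbol gains a full factor $|\xi-\eta|+|\eta-\sigma|$ (Lemma~\ref{final-est}). Together with $G_{1,0}\equiv 2F_{1,0}$ this bounds each piece separately. This cancellation, not sharper pointwise bounds on $c/\Phi$, is the technical heart of the argument; if you try to close the quartic estimate by multiplier bounds alone you will remain one derivative short.
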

For the proof of this theorem we will use the strategy developed by Hunter, Ifrim, Tataru and Wong \cite{HITW2015}, in which a modified energy is defined, based on a normal form transformation, in order to prove enhanced existence time for the Burgers--Hilbert equation. This method was further developed and applied to the full water wave problem in a series of papers \cite{Harrop-Griffiths2017, Hunter2016, Ifrim2017}. However, for our purposes the paper \cite{EW2018} by Ehrnström and Wang is the most relevant one, in which the authors use the method developed in \cite{HITW2015} to obtain an enhanced existence time for the fractional KdV equation. In the present work the symbol of the normal form transformation shares a lot of properties with the corresponding symbol found in \cite{EW2018}, which makes the modified energies for the two equations similar. In particular, we are able to adapt the ideas and techniques developed in \cite{EW2018} to treat the modifed energy.

Theorem \ref{main-res} can be seen as a first step in the investigation of global well-posedness of \eqref{frac-bbm-eq} with $0<\alpha<1$. Indeed, it may be possible to repeat the argument in the proof of Theorem \ref{main-res} and show that the maximal existence time satisfies $T\gtrsim \frac{1}{\epsilon^3}$. In fact, in \cite{Kwon2018}, where the authors study the NLS and modified KdV equations,  this procedure is repeated infinitely many times using an iterative scheme, yielding global well-posedness results for the two equations. However, to the authors knowledge, this method has not been applied to equations involving fractional derivatives, such as \eqref{frac-bbm-eq}. 

Note that in Theorem \ref{main-res}, larger $\alpha$ requires higher regularity of the initial data, which is in contrast with the known well-posedness theory outlined above, where larger $\alpha$ allows for lower regularity of the initial data. This contrast is most likely due to the technique used in the present work, rather than an inherent property of the initial value problem \eqref{frac-bbm-eq}. Indeed, the normal form transformation leads to a natural choice for the modified energy and it is this choice of energy which requires us to have $u\in H^{N+\frac{\alpha}{2}}(\mathbb{R})$.

We point out that the lower bound on $N$ can most likely be decreased to $\frac{3}{2}+\delta$, $\delta>0$, by using that
\begin{equation}\label{embedding}
\norm{u}_{L^\infty}\lesssim \norm{u}_{H^{\frac{1}{2}+\delta}}.
\end{equation}
However, we choose $\delta=\frac{1}{2}$ so that the Sobolev index in \eqref{embedding} is integer valued, making the the proof of Theorem \ref{main-res} less technical. In connection with this we also mention that in \cite{EW2018} the authors require $N\geq 3$. However, this can probably be improved to $N\geq 2$, at least in the case when $0<\alpha<1$. 

In Section \ref{sec-normal-form} we carry out the normal form transformation, with the purpose of removing the quadratic term $u\partial_xu$ in \eqref{frac-bbm-eq}, making the transformed equation cubic. We derive an expression for the symbol $m$ associated with the normal form transformation and establish some growth estimates for it, that are useful when estimating the modified energy.

We proceed in Section \ref{sec-mod-energy} by introducing the modified energy $\mathrm{E}^{(N)}$ as in \cite{HITW2015}, based on the normal form transformation, and show that $\mathrm{E}^{(N)}\simeq \norm{u}_{H^{N+\frac{\alpha}{2}}}^2$, when $\norm{u}_{H^{N+\frac{\alpha}{2}}}$ is sufficiently small.

Section \ref{sec-en-est} is the most technical part of the paper, in which we derive a quartic energy estimate for the modified energy. We show that
\begin{equation}\label{introenest}
\frac{\mathrm{d}}{\mathrm{d}t}\mathrm{E}^{(N)}(t)\lesssim \norm{u(t)}_{H^{N+\frac{\alpha}{2}}}^4,
\end{equation}
which implies that
\begin{equation*}
\mathrm{E}^{(N)}(t)\lesssim \mathrm{E}^{(N)}(0)+\int_0^t\norm{u(s)}_{H^{N+\frac{\alpha}{2}}}^4\ \mathrm{d}s.
\end{equation*}
Using then that $\mathrm{E}^{(N)}\simeq \norm{u}_{H^{N+\frac{\alpha}{2}}}^2$, we obtain
\begin{equation*}
\norm{u(t)}_{H^{N+\frac{\alpha}{2}}}^2\lesssim \norm{u_0}_{H^{N+\frac{\alpha}{2}}}+\int_0^t\norm{u(s)}_{H^{N+\frac{\alpha}{2}}}^4\ \mathrm{d}s.
\end{equation*}
An application of Grönwall's inequality then yields the lower bound for the maximal existence time in Theorem \ref{main-res}. The existence and uniqness part of the theorem then follows as in \cite{LPS2014}.
Below we give a rough outline on how to prove \eqref{introenest}, using methods developed in \cite{EW2018}.

The modified energy is differentiated with respect to time and decomposed into high and low order parts, where the low order parts can be estimated directly, using the growth estimates derived in Section \ref{sec-normal-form}. In the end we are essentially left with two high order terms, $F_{1,0}$, $G_{1,0}$, given by
\begin{align*} 
F_{1,0}&=\int_{\mathbb{R}^3}m(\eta-\sigma,\sigma)(\mathrm{i}\sigma)^k\mathrm{i}(\eta-\sigma)\overline{(\mathrm{i}\xi)^k}\ \mathrm{d}\mathrm{Q}(u),\\
G_{1,0}&=-2\int_{\mathbb{R}^3}\frac{1+\abs{\xi}^\alpha}{1+\abs{\eta}^\alpha}m(\xi-\eta,\eta)\mathrm{i}\eta(\mathrm{i}\sigma)^{k-1}\mathrm{i}(\xi-\eta)\overline{(\mathrm{i}\xi)^k}\ \mathrm{d}\mathrm{Q}(u),
\end{align*}
where
\begin{equation*}
\mathrm{d}\mathrm{Q}(u)=\hat{u}(\eta-\sigma)\hat{u}(\xi-\eta)\hat{u}(\sigma)\overline{\hat{u}(\xi)}.
\end{equation*}
We are able to show that $G_{1,0}\equiv2F_{1,0}$, meaning that $G_{1,0}$ is equal to $2F_{1,0}$, modulo a term that can be estimated by $\norm{u}_{H^{N+\frac{\alpha}{2}}}^2\norm{u}_{H^{k+\frac{\alpha}{2}}}^2$.
We want to estimate $F_{1,0}$ and $G_{1,0}$ with $\norm{u}_{H^{N+\frac{\alpha}{2}}}^2\norm{u}_{H^{k+\frac{\alpha}{2}}}^2$, however, due to the growth estimates on $m$, direct estimates only yield 
\begin{equation*}
\abs{F_{1,0}}+\abs{G_{1,0}}\lesssim\norm{u}_{H^{N+\frac{\alpha}{2}}}^2\norm{u}_{H^{k+1+\frac{\alpha}{2}}}^2.
\end{equation*}
The problem is that there are to many factors $\xi$ and $\sigma$ appearing in $F_{1,0},\ G_{1,0}$. We would like to transfer some of these to the either $\eta-\sigma$ or $\xi-\eta$. Clearly, in some regions of $\mathbb{R}^3$ such a transfer is possible, for instance when $\abs{\xi}\lesssim \abs{\xi-\eta}+\abs{\eta-\sigma}$. The frequency space $\mathbb{R}^3$ can then be decomposed according to whether or not we have this property. This leads to the decomposition $\mathbb{R}^3= \mathcal{A}_1\cup\mathcal{A}_2\cup\mathcal{A}_2^c$, where we in $ \mathcal{A}_1\cup\mathcal{A}_2$ can transfer factors, while in $\mathcal{A}_2^c$ we can not.  Hence, when the domain of integration is restricted to $\mathcal{A}_1\cup\mathcal{A}_2$, $F_{1,0}$, $G_{1,0}$ can be estimated in a straightforward way. The domain $\mathcal{A}_2^c$ still remains, and here we utilize that when integrating over $\mathcal{A}_2^c$, then $G_{1,0}+2F_{1,0}$ is a good commutator, meaning that enough of the factors $\xi$ and $\sigma$ are canceled. This allows us to estimate $\abs{G_{1,0}+2F_{1,0}}\lesssim \norm{u}_{H^{N+\frac{\alpha}{2}}}^2\norm{u}_{H^{k+\frac{\alpha}{2}}}^2$. The terms $F_{1,0}$ and $G_{1,0}$ can then be estimated separately with $\norm{u}_{H^{N+\frac{\alpha}{2}}}^2\norm{u}_{H^{k+\frac{\alpha}{2}}}^2$, by using that $G_{1,0}\equiv 2F_{1,0}$.
\section{Normal form transformation}\label{sec-normal-form}
We introduce a new variable $w$ via a normal form transformation
\begin{equation}\label{normal-form}
w=u+P(u,u),
\end{equation}
where 
\begin{equation}\label{normal-form}
\FF(P(f_1,f_2))(\xi)=\int_\mathbb{R}m(\xi-\eta,\eta)\hat{f}_1(\xi-\eta)\hat{f}_2(\eta)\ \mathrm{d}\eta,
\end{equation}
and where $m$ is to be determined so that $w$ satisfies a cubically nonlinear PDE.
Using \eqref{frac-bbm-eq} we find that
\begin{align*}
&\partial_tw+\partial_xw+\abs{\mathrm{D}}^\alpha\partial_tw\\
&=(1+\abs{\mathrm{D}}^\alpha)\left[\partial_tu+P(\partial_tu,u)+P(u,\partial_tu)\right]+\partial_xu+\partial_xP(u,u)\\
&=(1+\abs{\mathrm{D}}^\alpha)[-(1+\abs{\mathrm{D}}^\alpha)^{-1}(\partial_xu+u\partial_xu)-P((1+\abs{\mathrm{D}}^\alpha)^{-1}(\partial_xu+u\partial_xu),u)\\
&\quad -P(u,(1+\abs{\mathrm{D}}^\alpha)^{-1}(\partial_xu+u\partial_xu))]+\partial_xu+\partial_xP(u,u)\\
&=-u\partial_xu-(1+\abs{\mathrm{D}}^\alpha)P((1+\abs{\mathrm{D}}^\alpha)^{-1}\partial_xu,u)-(1+\abs{\mathrm{D}}^\alpha)P(u,(1+\abs{\mathrm{D}}^\alpha)^{-1}\partial_xu)\\
&\quad +\partial_xP(u,u)\underbrace{-(1+\abs{\mathrm{D}}^\alpha)\left[P((1+\abs{\mathrm{D}}^\alpha)^{-1}(u\partial_xu),u)+P(u,(1+\abs{\mathrm{D}}^\alpha)^{-1}(u\partial_xu))\right]}_{=:R(u)},
\end{align*}
which implies that
\begin{align}
\FF(\partial_tw+\partial_xw+\abs{\mathrm{D}}^\alpha\partial_tw)(\xi)&=\FF(-u\partial_xu-(1+\abs{\mathrm{D}}^\alpha)P((1+\abs{\mathrm{D}}^\alpha)^{-1}\partial_xu,u)\nonumber\\
&\quad -(1+\abs{\mathrm{D}}^\alpha)P(u,(1+\abs{\mathrm{D}}^\alpha)^{-1}\partial_xu)\nonumber\\
&\quad +\partial_xP(u,u)+R(u))(\xi).\label{linear-eq}
\end{align}
The function $m$ is chosen in such a way as to remove the quadratic terms in \eqref{linear-eq}, that is, $m$ must satisfy
\begin{align*}
&\int_\mathbb{R}\bigg[-\frac{\xi}{2}+m(\xi-\eta,\eta)\bigg(\xi-(1+\abs{\xi}^\alpha)(1+\abs{\xi-\eta}^\alpha)^{-1}(\xi-\eta)\\
&\quad -(1+\abs{\xi}^\alpha)(1+\abs{\eta}^\alpha)^{-1}\eta\bigg)\bigg]\hat{u}(\xi-\eta)\hat{u}(\eta)\ \mathrm{d}\eta=0
\end{align*}
which holds if and only if
\begin{equation}\label{formula-m}
m(\xi-\eta,\eta)=\frac{\xi(1+\abs{\xi-\eta}^\alpha)(1+\abs{\eta}^\alpha)}{2[\xi(1+\abs{\eta}^\alpha)(\abs{\xi-\eta}^\alpha-\abs{\xi}^\alpha)-\eta(1+\abs{\xi}^\alpha)(\abs{\xi-\eta}^\alpha-\abs{\eta}^\alpha)]}.
\end{equation}
Note that $m$ is symmetric in $(\xi-\eta)$ and $\eta$, that is $m(\xi-\eta,\eta)=m(\eta,\xi-\eta)$. In addition, $m$ satisifies
\begin{equation}\label{functional-rel}
m(\xi-\eta,\eta)\eta(1+\abs{\xi}^\alpha)+m(\eta-\xi,\xi)\xi(1+\abs{\eta}^\alpha)=0.
\end{equation}
Similar to \cite[Proposition 2.1]{EW2018}, we have the following result.
\begin{proposition}\label{prop-m}
The symbol $m$ satisfies 
\begin{align}
\frac{\abs{\xi-\eta}}{\abs{\eta}}+\frac{\abs{\eta}}{\abs{\xi-\eta}}\lesssim\abs{m(\xi-\eta,\eta)}&\lesssim\frac{\abs{\xi-\eta}^{1-\alpha}}{\abs{\eta}}+\frac{\abs{\eta}^{1-\alpha}}{\abs{\xi-\eta}},\quad \text{for } (\xi-\eta)^2+\eta^2\leq 1,\label{est-1}\\
\frac{\abs{\xi-\eta}^{1-\alpha}}{\abs{\eta}}+\frac{\abs{\eta}^{1-\alpha}}{\abs{\xi-\eta}}\lesssim\abs{m(\xi-\eta,\eta)}&\lesssim \frac{\abs{\xi-\eta}}{\abs{\eta}}+\frac{\abs{\eta}}{\abs{\xi-\eta}},\quad \text{for } (\xi-\eta)^2+\eta^2\geq 1\label{est-2}.
\end{align}
\end{proposition}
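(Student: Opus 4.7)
The proof closely follows that of \cite[Proposition 2.1]{EW2018}. By the symmetry $m(\xi-\eta,\eta)=m(\eta,\xi-\eta)$, it suffices to prove the bounds under the assumption $\abs{\eta}\leq\abs{\xi-\eta}$. Under this assumption $\abs{\xi}\leq 2\abs{\xi-\eta}$, so the numerator of $m$ in \eqref{formula-m} is comparable to $\abs{\xi-\eta}(1+\abs{\xi-\eta}^\alpha)(1+\abs{\eta}^\alpha)$. In the small regime $(\xi-\eta)^2+\eta^2\leq 1$ the factors $1+\abs{\cdot}^\alpha$ are comparable to $1$, whereas in the large regime $(\xi-\eta)^2+\eta^2\geq 1$ at least the factor $1+\abs{\xi-\eta}^\alpha$ is comparable to $\abs{\xi-\eta}^\alpha$. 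The task therefore reduces to two-sided bounds on the denominator
\begin{equation*}
D(\xi,\eta):=\xi(1+\abs{\eta}^\alpha)(\abs{\xi-\eta}^\alpha-\abs{\xi}^\alpha)-\eta(1+\abs{\xi}^\alpha)(\abs{\xi-\eta}^\alpha-\abs{\eta}^\alpha).
\end{equation*}

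I would estimate $D$ by case analysis on the relative sizes of $\abs{\eta}$ and $\abs{\xi-\eta}$. In the generic subcase $\abs{\eta}\ll\abs{\xi-\eta}$, one has $\abs{\xi}\simeq\abs{\xi-\eta}$, and the mean value theorem applied to $t\mapsto\abs{t}^\alpha$ yields
\begin{equation*}
\bigl|\abs{\xi-\eta}^\alpha-\abs{\xi}^\alpha\bigr|\lesssim\abs{\eta}\abs{\xi-\eta}^{\alpha-1},\qquad \abs{\xi-\eta}^\alpha-\abs{\eta}^\alpha\simeq\abs{\xi-\eta}^\alpha.
\end{equation*}
Consequently the second term in $D$ dominates and $\abs{D}\simeq\abs{\eta}(1+\abs{\xi}^\alpha)\abs{\xi-\eta}^\alpha$. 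Taking the ratio with the numerator and tracking how the $(1+\abs{\cdot}^\alpha)$ factors collapse in each regime produces exactly the bounds \eqref{est-1} and \eqref{est-2} in this subcase.

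The main obstacle is the degenerate subcase $\abs{\eta}\simeq\abs{\xi-\eta}$, where the two terms in $D$ may partially cancel and the simple mean value estimates above are no longer sharp. To treat it, set $\phi(t):=t\abs{t}^\alpha$, and rewrite
\begin{equation*}
D=\bigl[\phi(\xi-\eta)-\phi(\xi)+\phi(\eta)\bigr]+\bigl[(\xi\abs{\eta}^\alpha-\eta\abs{\xi}^\alpha)\abs{\xi-\eta}^\alpha-(\xi-\eta)\abs{\xi}^\alpha\abs{\eta}^\alpha\bigr].
\end{equation*}
Since $\phi$ is $C^1$ with $\phi'(t)=(1+\alpha)\abs{t}^\alpha$ and is strictly convex on $[0,\infty)$ and strictly concave on $(-\infty,0]$, a further split into $\operatorname{sign}(\eta)=\operatorname{sign}(\xi-\eta)$ versus $\operatorname{sign}(\eta)\neq\operatorname{sign}(\xi-\eta)$ allows one to extract nondegenerate upper and lower bounds on the first bracket, while the second bracket is treated by direct expansion in the same cases. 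Assembling the subcases and reinstating the numerator factors yields \eqref{est-1} and \eqref{est-2}.
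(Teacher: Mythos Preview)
Your claim that ``the numerator of $m$ is comparable to $\abs{\xi-\eta}(1+\abs{\xi-\eta}^\alpha)(1+\abs{\eta}^\alpha)$'' is false: the numerator carries a factor of $\xi$, which vanishes when $\eta=-(\xi-\eta)$. This situation lies inside your degenerate subcase $\abs{\eta}\simeq\abs{\xi-\eta}$ with opposite signs, so you cannot treat numerator and denominator separately there. In fact the denominator $D(\xi,\eta)$ also vanishes at $\xi=0$ (both terms in $D$ are zero when $\xi=0$ and $\abs{\xi-\eta}=\abs{\eta}$), and it is the \emph{ratio} that stays bounded above and below. Your convexity/sign split sketch does not track this simultaneous vanishing, so as written the lower bounds in \eqref{est-1}--\eqref{est-2} are not established in the opposite-sign regime.

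The paper avoids this difficulty by passing to polar coordinates $\xi-\eta=r\cos\theta$, $\eta=r\sin\theta$ and analysing the denominator as $r^{1+\alpha}\tilde n(r,\theta)$. The key observation is that, for each fixed $r$, the zeros of $\tilde n$ in $\theta$ are exactly $\cos\theta=0$, $\sin\theta=0$ and $\cos\theta+\sin\theta=0$, each simple; hence $\tilde n=\cos\theta\,\sin\theta\,(\cos\theta+\sin\theta)\,h(r,\theta)$ with $h$ bounded away from zero and $\abs{h}\simeq r^\alpha$ for $r\ge 1$. The factor $\cos\theta+\sin\theta$ (which is $\xi/r$) then cancels against the $\xi$ in the numerator, giving two-sided bounds uniformly in all subcases. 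Your approach can be repaired, but in the opposite-sign degenerate zone you must first cancel the common factor $\xi$ from numerator and denominator before estimating; otherwise the argument breaks down precisely where the paper's factorisation does the work.
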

\begin{proof}
We introduce polar coordinates
\begin{equation*}
r\cos(\theta)=\xi-\eta,\quad r\sin(\theta)=\eta,\quad r\geq 0,\ 0\leq \theta<2\pi,
\end{equation*}
so that 
\begin{equation*}
m(\xi-\eta,\eta)=\frac{r(\cos(\theta)+\sin(\theta))(1+r^\alpha\abs{\cos(\theta)}^\alpha)(1+r^\alpha\abs{\sin(\theta)}^\alpha)}{2n(r\cos(\theta),r\sin(\theta))},
\end{equation*}
where
\begin{align*}
n(r\cos(\theta),r\sin(\theta))&=r^{1+\alpha}\big[(\cos(\theta)+\sin(\theta))(1+r^\alpha\abs{\sin(\theta)}^\alpha)(\abs{\cos(\theta)}^\alpha-\abs{\cos(\theta)+\sin(\theta)}^\alpha)\\
&\quad -\sin(\theta)(1+r^\alpha\abs{\cos(\theta)}^\alpha)(\abs{\cos(\theta)}^\alpha-\abs{\sin(\theta)}^\alpha)\big]\\
&=:r^{1+\alpha}\tilde{n}(r\cos(\theta),r\sin(\theta)).
\end{align*}
We have that $\tilde{n}(r\cos(\theta),r\sin(\theta))=0$ if and only if either $\cos(\theta)=0$, $\sin(\theta)=0$ or $\cos(\theta)+\sin(\theta)=0$. Moreover, these zeros are all of order $1$. It follows that
\begin{equation*}
n(r\cos(\theta),r\sin(\theta)=r^{1+\alpha}\cos(\theta)\sin(\theta)(\cos(\theta)+\sin(\theta))h(r,\theta),
\end{equation*}
where $h$ is a function that is bounded away from $0$ and $\abs{h(r,\theta)}\simeq r^\alpha$ for $r\geq 1$ and $h$ is bounded for $r\leq 1$. Hence, 
\begin{equation*}
\abs{m(\xi-\eta,\eta)}=\frac{r^{2-\alpha}(1+\abs{\xi-\eta}^\alpha)(1+\abs{\eta}^\alpha)}{2\abs{(\xi-\eta)\eta h(r,\theta)}}\lesssim\begin{cases}
&\frac{\abs{\xi-\eta}^{1-\alpha}}{\abs{\eta}}+\frac{\abs{\eta}^{1-\alpha}}{\abs{\xi-\eta}},\quad (\xi-\eta)^2+\eta^2\leq 1,\\
&\frac{\abs{\xi-\eta}}{\abs{\eta}}+\frac{\abs{\eta}}{\abs{\xi-\eta}},\quad (\xi-\eta)^2+\eta^2\geq 1.
\end{cases}
\end{equation*}
and
\begin{equation*}
\abs{m(\xi-\eta,\eta)}=\frac{r^{2-\alpha}(1+\abs{\xi-\eta}^\alpha)(1+\abs{\eta}^\alpha)}{2\abs{(\xi-\eta)\eta h(r,\theta)}}\gtrsim\begin{cases}
&\frac{\abs{\xi-\eta}}{\abs{\eta}}+\frac{\abs{\eta}}{\abs{\xi-\eta}},\quad (\xi-\eta)^2+\eta^2\leq 1,\\
&\frac{\abs{\xi-\eta}^{1-\alpha}}{\abs{\eta}}+\frac{\abs{\eta}^{1-\alpha}}{\abs{\xi-\eta}},\quad (\xi-\eta)^2+\eta^2\geq 1.
\end{cases}
\end{equation*}
\end{proof}
\section{The modified energy}\label{sec-mod-energy}
In the previous section we introduced in \eqref{normal-form} a new variable $w$ via a normal form transformation, with $w$ satisfying the PDE
\begin{equation}\label{PDE-w}
\partial_tw+\partial_xw+\abs{\mathrm{D}}^\alpha\partial_tw=R(u),
\end{equation}
where $R(u)$ is cubic in $u$. There is a loss of derivatives when applying the standard energy method directly to \eqref{PDE-w}. Because of this we follow \cite{HITW2015}, and continue to work with \eqref{frac-bbm-eq}, but introduce a suitable modified energy. In order to find such an energy, we first use \eqref{normal-form}, and note that
\begin{align}
\norm{\partial_x^k(1+\abs{\mathrm{D}}^\alpha)^\frac{1}{2}w}_{L^2(\mathbb{R})}^2&=\norm{\partial_x^k(1+\abs{\mathrm{D}}^\alpha)^\frac{1}{2}u}_{L^2(\mathbb{R})}^2+2\langle \partial_x^k (1+\abs{\mathrm{D}}^\alpha)^\frac{1}{2}u,\partial_x^k(1+\abs{\mathrm{D}}^\alpha)^\frac{1}{2}P(u,u)\rangle\nonumber\\
&\quad +\norm{\partial_x^k(1+\abs{\mathrm{D}}^\alpha)^\frac{1}{2}P(u,u)}_{L^2(\mathbb{R})}^2,\label{mod-energy-motivation}
\end{align}
where $\langle f,g\rangle=\int_\mathbb{R}f\bar{g}\ \mathrm{d}x$. Using \eqref{mod-energy-motivation} as motivation, we define the kth partial energy
\begin{equation}\label{modified-energy}
\mathrm{E}_k(t):=\norm{\partial_x^k(1+\abs{\mathrm{D}}^\alpha)^\frac{1}{2}u}_{L^2(\mathbb{R})}^2+2\langle \partial_x^k(1+\abs{\mathrm{D}}^\alpha)^\frac{1}{2}u,\partial_x^k(1+\abs{\mathrm{D}}^\alpha)^\frac{1}{2}P(u,u)\rangle.
\end{equation}
We disregard the term $\norm{\partial_x^k(1+\abs{\mathrm{D}}^\alpha)^\frac{1}{2}P(u,u)}_{L^2(\mathbb{R})}^2$ in \eqref{mod-energy-motivation}, since this is not comparable to the $H^{k+\frac{\alpha}{2}}$-norm. 
Note that $\mathrm{E}_k(t)$ contains the fractional derivative $(1+\mathrm{D}^\alpha)^\frac{1}{2}$, and this is to accommodate for the terms $(1+\abs{\xi}^\alpha),\ (1+\abs{\eta}^\alpha)$ appearing in the functional relation \eqref{functional-rel}. This is in contrast with \cite[equation (2.9)]{EW2018}, where there are no such terms appearing in the functional relation.

We are now ready to define modified energy, and to show that it is equivalent to the $H^{N+\frac{\alpha}{2}}(\mathbb{R})$-norm.
\begin{lemma}
\begin{equation}\label{equivalence}
\mathrm{E}^{(N)}(t):=\sum_{k=1}^N\mathrm{E}_k(t)+\norm{(1+\abs{\mathrm{D}}^\alpha)^\frac{1}{2}u}_{L^2(\mathbb{R})}^2\simeq \norm{u}_{H^{N+\frac{\alpha}{2}}(\mathbb{R})}^2,
\end{equation}
uniformly for $\norm{u}_{H^{N+\frac{\alpha}{2}}(\mathbb{R})}<\varepsilon$.
\end{lemma}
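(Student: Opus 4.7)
The plan is to decompose $\mathrm{E}^{(N)}$ into the unperturbed quadratic part
\begin{equation*}
\mathrm{M}^{(N)} := \sum_{k=1}^{N}\|\partial_x^k(1+\abs{\mathrm{D}}^\alpha)^{1/2}u\|_{L^2}^2 + \|(1+\abs{\mathrm{D}}^\alpha)^{1/2}u\|_{L^2}^2
\end{equation*}
and a cubic correction
\begin{equation*}
\mathrm{C}^{(N)} := 2\sum_{k=1}^{N}\langle \partial_x^k(1+\abs{\mathrm{D}}^\alpha)^{1/2}u, \partial_x^k(1+\abs{\mathrm{D}}^\alpha)^{1/2}P(u,u)\rangle,
\end{equation*}
so that $\mathrm{E}^{(N)} = \mathrm{M}^{(N)} + \mathrm{C}^{(N)}$, and to verify (i) $\mathrm{M}^{(N)}\simeq\|u\|_{H^{N+\alpha/2}}^2$ and (ii) $\abs{\mathrm{C}^{(N)}}\lesssim\|u\|_{H^{N+\alpha/2}}^3$. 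Item (i) is immediate from Plancherel, since the combined Fourier weight $(1+\abs\xi^\alpha)(1+\sum_{k=1}^{N}\xi^{2k})$ is pointwise comparable to $(1+\xi^2)^{N+\alpha/2}$ (both are of order one near $\xi = 0$ and both behave like $\abs\xi^{2N+\alpha}$ for large $\abs\xi$). Given (i) and (ii), the smallness assumption $\|u\|_{H^{N+\alpha/2}}<\varepsilon$ forces $\abs{\mathrm{C}^{(N)}}\le\tfrac12\mathrm{M}^{(N)}$ for $\varepsilon$ sufficiently small, whence $\tfrac12\mathrm{M}^{(N)}\le\mathrm{E}^{(N)}\le\tfrac32\mathrm{M}^{(N)}$ and the claimed equivalence follows.

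For (ii), Cauchy--Schwarz bounds each summand in $\mathrm{C}^{(N)}$ by $\|u\|_{H^{k+\alpha/2}}\|P(u,u)\|_{H^{k+\alpha/2}}$, which together with $k\le N$ reduces the task to the bilinear estimate
\begin{equation*}
\|P(u,u)\|_{H^{k+\alpha/2}}\lesssim\|u\|_{H^{N+\alpha/2}}^2, \qquad 1\le k\le N.
\end{equation*}
I would write this norm via Plancherel and split the frequency plane into the low region $(\xi-\eta)^2+\eta^2\le 1$ and the high region $(\xi-\eta)^2+\eta^2\ge 1$, applying the respective upper bounds on $\abs{m}$ from Proposition \ref{prop-m}. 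Using the symmetry $m(\xi-\eta,\eta)=m(\eta,\xi-\eta)$, I would restrict to $\abs\eta\le\abs{\xi-\eta}$, so that $\abs\xi\lesssim\abs{\xi-\eta}$ and the Sobolev weight $(1+\xi^2)^{(k+\alpha/2)/2}$ can be moved onto the high-frequency factor $\hat u(\xi-\eta)$. Since $N\ge 2$ yields both $H^{N+\alpha/2}\hookrightarrow L^\infty$ and $\hat u\in L^1(\mathbb{R})$ with $\|\hat u\|_{L^1}\lesssim\|u\|_{H^{N+\alpha/2}}$, a Young-type convolution bound applied after dyadic summation in $\abs\eta$ then closes the estimate.

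The main obstacle is the low-frequency singularity $\abs{m(\xi-\eta,\eta)}\lesssim\abs\eta^{-1}$ as $\eta\to 0$ (and symmetrically in $\xi-\eta$): the multiplier $\abs\eta^{-1}$ is not locally square integrable in one dimension, so one cannot directly invoke Plancherel in $L^2_\eta$ for the low-frequency contribution. The resolution is to perform a dyadic decomposition in $\abs\eta$ around zero and pair the singular factor with the other two factors through an $L^1_\eta$ estimate on the bilinear form; this is the same frequency decomposition used in \cite[Section 3]{EW2018} for the fractional KdV modified energy, and it adapts here because the bounds for $m$ in Proposition \ref{prop-m} mirror those in \cite[Proposition 2.1]{EW2018}.
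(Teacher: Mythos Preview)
Your reduction via Cauchy--Schwarz to the bilinear estimate $\|P(u,u)\|_{H^{k+\alpha/2}}\lesssim\|u\|_{H^{N+\alpha/2}}^2$ is the fatal step: this estimate is false. The symbol $m(\xi-\eta,\eta)$ behaves like $c(\xi)/\eta$ as $\eta\to 0$ (and symmetrically as $\xi-\eta\to 0$), so at the level of the bilinear operator $P(u,u)$ contains, schematically, a factor $\partial_x^{-1}u$ (the Fourier multiplier $1/\eta$). For generic $u\in H^{N+\alpha/2}$ with $\hat u(0)\neq 0$ this is not in $L^2$, let alone in $H^{k+\alpha/2}$; no dyadic decomposition or $L^1_\eta$ pairing can repair this, because $|\eta|^{-1}\hat u(\eta)$ is simply not integrable near the origin. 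The paper in fact flags this explicitly: the quartic term $\|\partial_x^k(1+|\mathrm{D}|^\alpha)^{1/2}P(u,u)\|_{L^2}^2$ is dropped from the modified energy precisely because it is \emph{not} controlled by $\|u\|_{H^{k+\alpha/2}}$. Your appeal to \cite[Section~3]{EW2018} does not help, since that reference also works trilinearly rather than bounding the bilinear norm.

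What makes the cross term $\langle\partial_x^k(1+|\mathrm{D}|^\alpha)u,\partial_x^kP(u,u)\rangle$ finite and small is a cancellation that is only visible in the full \emph{trilinear} integral. The paper distributes $\partial_x^k$ over $P(u,u)$; the dangerous piece is $A_0=\langle(1+|\mathrm{D}|^\alpha)\partial_x^ku,P(u,\partial_x^ku)\rangle$, where the factor $(\mathrm{i}\eta)^k$ kills the $\eta=0$ singularity but the $\xi-\eta=0$ singularity survives. One then integrates by parts in the convolution variable (writes $(\mathrm{i}\xi)^k=(\mathrm{i}\xi)^{k-1}\mathrm{i}(\xi-\eta)+(\mathrm{i}\xi)^{k-1}\mathrm{i}\eta$) and uses the algebraic identity
\[
m(\xi-\eta,\eta)\,\eta\,(1+|\xi|^\alpha)+m(\eta-\xi,\xi)\,\xi\,(1+|\eta|^\alpha)=0,
\]
together with a swap $(\xi,\eta)\leftrightarrow(\eta,\xi)$, to show that the remaining bad piece equals $-A_0$. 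This yields $2A_0=A_0^1$, where $A_0^1$ carries an explicit factor $\mathrm{i}(\xi-\eta)$ that cancels the second singularity, after which the growth bounds from Proposition~\ref{prop-m} give $|A_0^1|\lesssim\varepsilon\|u\|_{H^{k+\alpha/2}}^2$ directly. In short, the functional relation \eqref{functional-rel}---not a frequency decomposition---is the mechanism that tames the singularity, and it requires keeping all three factors of $\hat u$ together.
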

\begin{proof}
In order to establish \eqref{equivalence} it is sufficient to show that
\begin{equation}\label{suff-est}
\abs{\langle \partial_x^k(1+\abs{\mathrm{D}}^\alpha)^\frac{1}{2}u,\partial_x^k(1+\abs{\mathrm{D}}^\alpha)^\frac{1}{2}P(u,u)\rangle}\lesssim \epsilon\norm{u}_{H^{k+\frac{\alpha}{2}}(\mathbb{R})}^2.
\end{equation} 
A first step towards achieving this is to decompose
\begin{align*}
\langle \partial_x^k(1+\abs{\mathrm{D}}^\alpha)^\frac{1}{2}u,\partial_x^k(1+\abs{\mathrm{D}}^\alpha)^\frac{1}{2}P(u,u)\rangle&=2\langle (1+\abs{\mathrm{D}}^\alpha)^\frac{1}{2}\partial_x^ku,(1+\abs{\mathrm{D}}^\alpha)^\frac{1}{2}P(u,\partial_x^ku)\rangle\\
&\quad +\sum_{j=1}^{k-1}c_{k,j}\langle (1+\abs{\mathrm{D}}^\alpha)^\frac{1}{2}\partial_x^ku,(1+\abs{\mathrm{D}}^\alpha)^\frac{1}{2}P(\partial_x^ju,\partial_x^{k-j}u)\rangle\\
&=:2A_0+\sum_{j=1}^{k-1}c_{k,j}A_j,
\end{align*}
where $c_{k,j}$ are the binomial coefficients. Due to the properties of $m$ described in Proposition \eqref{prop-m}, $A_0$ is the worst term to estimate, and we will treat it using change of variables, integration by parts and \eqref{functional-rel}.
\begin{align}
A_0&=\int_{\mathbb{R}^2}m(\xi-\eta,\eta)\hat{u}(\xi-\eta)(\mathrm{i}\eta)^k\hat{u}(\eta)(1+\abs{\xi}^\alpha)\overline{(\mathrm{i}\xi)^k\hat{u}(\xi)}\ \mathrm{d}\eta\ \mathrm{d}\xi\nonumber\\
&=-\int_{\mathbb{R}^2}m(\xi-\eta,\eta)\mathrm{i}(\xi-\eta)\hat{u}(\xi-\eta)(\mathrm{i}\eta)^k\hat{u}(\eta)(1+\abs{\xi}^\alpha)\overline{(\mathrm{i}\xi)^{k-1}\hat{u}(\xi)}\ \mathrm{d}\eta\ \mathrm{d}\xi\nonumber\\
&\quad -\int_{\mathbb{R}^2}m(\xi-\eta,\eta)\hat{u}(\xi-\eta)(\mathrm{i}\eta)^{k+1}\hat{u}(\eta)(1+\abs{\xi}^\alpha)\overline{(\mathrm{i}\xi)^{k-1}\hat{u}(\xi)} \mathrm{d}\eta\ \mathrm{d}\xi\nonumber\\
&=:A_0^1+A_0^2,\label{A_0-calc}
\end{align}
and
\begin{align*}
A_0^2&=-\int_{\mathbb{R}^2}m(\xi-\eta,\eta)\hat{u}(\xi-\eta)(\mathrm{i}\eta)^{k+1}\hat{u}(\eta)(1+\abs{\xi}^\alpha)\overline{(\mathrm{i}\xi)^{k-1}\hat{u}(\xi)}\ \mathrm{d}\eta\ \mathrm{d}\xi\\
&=-\int_{\mathbb{R}^2}m(\xi-\eta,\eta)\hat{u}(\eta-\xi)(\mathrm{i}\xi)^{k-1}\hat{u}(\xi)(1+\abs{\xi}^\alpha)\overline{(\mathrm{i}\eta)^{k+1}\hat{u}(\eta)}\ \mathrm{d}\eta\ \mathrm{d}\xi\\
&=-\int_{\mathbb{R}^2}m(\eta-\xi,\xi)\hat{u}(\xi-\eta)(\mathrm{i}\eta)^{k-1}\hat{u}(\eta)(1+\abs{\eta}^\alpha)\overline{(\mathrm{i}\xi)^{k+1}\hat{u}(\xi)}\ \mathrm{d}\eta\ \mathrm{d}\xi\\
&=-\int_{\mathbb{R}^2}m(\xi-\eta,\eta)\hat{u}(\xi-\eta)(\mathrm{i}\eta)^k\hat{u}(\eta)(1+\abs{\xi}^\alpha)\overline{(\mathrm{i}\xi)^k\hat{u}(\xi)}\ \mathrm{d}\eta\ \mathrm{d}\xi\\
&=-A_0.
\end{align*}
where we in the second equality made the change of varibles $(\xi,\eta)\leftrightarrow -(\xi,\eta)$, in the third equality we made the change of variables $(\xi,\eta)\leftrightarrow (\eta,\xi)$ and in the fourth we used \eqref{functional-rel}.
Hence, it follows from \eqref{A_0-calc} that $2A_0=A_0^1$. It remains to estimate $A_0^1$. From Proposition \ref{prop-m} we  know that $m$ has singularities at $\eta=0$ and $\xi-\eta=0$. However, in $A_0^1$ there is a factor $(\xi-\eta)(\mathrm{i}\eta)^k$ appearing which cancels out these singularities. It is therefore enough to estimate the high frequencies. Using \eqref{est-2}, we find that
\begin{equation}\label{est-xi}
\abs{m(\xi-\eta,\eta)}\lesssim \frac{\abs{\xi-\eta}}{\abs{\eta}}+1+\frac{\abs{\xi}}{\abs{\xi-\eta}},\quad \text{for } (\xi-\eta)^2+\eta^2\geq 1.
\end{equation}
Equation \eqref{est-xi} can then be used to estimate the high frequency part of $A_0^1$.  
\begin{align*}
&\abs{\int_{(\xi-\eta)^2+\eta^2\geq 1}m(\xi-\eta,\eta)\mathrm{i}(\xi-\eta)\hat{u}(\xi-\eta)(\mathrm{i}\eta)^k\hat{u}(\eta)(1+\abs{\xi}^\alpha)\overline{(\mathrm{i}\xi)^{k-1}\hat{u}(\xi)}\ \mathrm{d}\eta\ \mathrm{d}\xi}\\
&\quad \lesssim\int_{\mathbb{R}^2}\abs{(\xi-\eta)^2\hat{u}(\xi-\eta)\eta^{k-1}\hat{u}(\eta)(1+\abs{\xi}^\alpha)\xi^{k-1}\hat{u}(\xi)}\ \mathrm{d}\eta\ \mathrm{d}\xi\\
&\qquad +\int_{\mathbb{R}^2}\abs{(\xi-\eta)\hat{u}(\xi-\eta)\eta^k\hat{u}(\eta)(1+\abs{\xi}^\alpha)\xi^{k-1}\hat{u}(\xi)}\ \mathrm{d}\eta\ \mathrm{d}\xi\\
&\qquad +\int_{\mathbb{R}^2}\abs{\hat{u}(\xi-\eta)\eta^k\hat{u}(\eta)(1+\abs{\xi}^\alpha)\xi^k\hat{u}(\xi)}\ \mathrm{d}\eta\ \mathrm{d}\xi\\
&\quad =:A_0^{1,1}+A_0^{1,2}+A_0^{1,3}.
\end{align*}
We proceed by estimating $A_0^{1,1},A_0^{1,2}$ and $A_0^{1,3}$ directly:
\begin{align*}
A_0^{1,1}&=\int_{\mathbb{R}^2}\abs{(\xi-\eta)^2\hat{u}(\xi-\eta)\eta^{k-1}\hat{u}(\eta)(1+\abs{\xi}^\alpha)\xi^{k-1}\hat{u}(\xi)}\ \mathrm{d}\eta\ \mathrm{d}\xi\\
&\leq\norm{\partial_x^2u\partial_x^{k-1}u}_{L^2(\mathbb{R})}\norm{(1+\abs{\mathrm{D}}^\alpha)\partial_x^{k-1}u}_{L^2(\mathbb{R})}\\
&\leq \norm{u}_{H^2}\norm{\partial_x^{k-1}u}_{L^\infty}\norm{u}_{H^{k-1+\alpha}}\\
&\lesssim \norm{u}_{H^2(\mathbb{R})}\norm{u}_{H^{k}(\mathbb{R})}^2,
\end{align*}
\begin{align*}
A_0^{1,2}&=\int_{\mathbb{R}^2}\abs{(\xi-\eta)\hat{u}(\xi-\eta)\eta^k\hat{u}(\eta)(1+\abs{\xi}^\alpha)\xi^{k-1}\hat{u}(\xi)}\ \mathrm{d}\eta\ \mathrm{d}\xi\\
&\leq \norm{\partial_xu(1+\abs{\mathrm{D}}^\alpha)\partial_x^{k-1}u}_{L^2(\mathbb{R})}\norm{\partial_x^ku}_{L^2(\mathbb{R})}\\
&\leq \norm{\partial_xu}_{L^\infty}\norm{u}_{H^{k-1+\alpha}}\norm{u}_{H^k}\\
 &\lesssim \norm{u}_{H^2(\mathbb{R})}\norm{u}_{H^k(\mathbb{R})}^2,
 \end{align*}
\begin{align*}
A_0^{1,3}&=\int_{\mathbb{R}^2}\abs{\hat{u}(\xi-\eta)\eta^k\hat{u}(\eta)(1+\abs{\xi}^\alpha)\xi^k\hat{u}(\xi)}\ \mathrm{d}\eta\ \mathrm{d}\xi\\
&=\int_{\mathbb{R}^2}\abs{\hat{u}(\xi-\eta)\eta^k\hat{u}(\eta)(1+\abs{\xi-\eta+\eta}^\frac{\alpha}{2}\abs{\xi}^\frac{\alpha}{2})\xi^k\hat{u}(\xi)}\ \mathrm{d}\eta\ \mathrm{d}\xi\\
&\lesssim \int_{\mathbb{R}^2}\abs{\hat{u}(\xi-\eta)\eta^k\hat{u}(\eta)(1+\abs{\xi-\eta}^\frac{\alpha}{2})(1+\abs{\xi}^\frac{\alpha}{2})\xi^k\hat{u}(\xi)}\ \mathrm{d}\eta\ \mathrm{d}\xi\\
&\quad +\int_{\mathbb{R}^2}\abs{\hat{u}(\xi-\eta)\eta^k\hat{u}(\eta)(1+\abs{\eta}^\frac{\alpha}{2})(1+\abs{\xi}^\frac{\alpha}{2})\xi^k\hat{u}(\xi)}\ \mathrm{d}\eta\ \mathrm{d}\xi\\
&\lesssim \norm{u}_{H^{1+\frac{\alpha}{2}}(\mathbb{R})}\norm{u}_{H^{k+\frac{\alpha}{2}}(\mathbb{R})}\norm{u}_{H^k(\mathbb{R})}+\norm{u}_{H^1(\mathbb{R})}\norm{u}_{H^{k+\frac{\alpha}{2}}(\mathbb{R})}^2.
\end{align*}
Combining the above estimates gives us the desired estimate:
\begin{equation*}
\abs{A_0^1}\lesssim \varepsilon\norm{u}_{H^{k+\frac{\alpha}{2}}(\mathbb{R})}^2.
\end{equation*}
The terms $A_j$ can be estimated directly by $\varepsilon\norm{u}_{H^{k+\frac{\alpha}{2}}(\mathbb{R})}^2$ using \eqref{est-2} and we therefore omit the details.
\end{proof}
\section{The energy estimates}\label{sec-en-est}
This section is devoted to the proof of the following energy inequality.
\begin{proposition}\label{en-estimates}
For $k\geq 1$,
\begin{equation}\label{kmain-en-est}
\frac{\mathrm{d}}{\mathrm{d}t}\mathrm{E}_k(t)\lesssim \norm{u(t)}_{H^{2+\frac{\alpha}{2}}}^2\norm{u(t)}_{H^{k+\frac{\alpha}{2}}}^2+\norm{u(t)}_{H^{k}}^4.
\end{equation}
\end{proposition}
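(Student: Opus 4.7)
The approach is to differentiate $\mathrm{E}_k(t)$ directly in time, substitute $\partial_tu=-(1+\abs{\mathrm{D}}^\alpha)^{-1}(\partial_xu+u\partial_xu)$ from \eqref{frac-bbm-eq}, and exploit the fact that the normal form was chosen precisely so that the cubic-in-$u$ contribution cancels. Writing $\Lambda=(1+\abs{\mathrm{D}}^\alpha)^{\frac{1}{2}}$ for brevity, I would first expand
\begin{equation*}
\frac{\mathrm{d}}{\mathrm{d}t}\mathrm{E}_k=2\langle \partial_x^k\Lambda u,\partial_x^k\Lambda\partial_tu\rangle+2\langle \partial_x^k\Lambda\partial_tu,\partial_x^k\Lambda P(u,u)\rangle+2\langle \partial_x^k\Lambda u,\partial_x^k\Lambda\partial_tP(u,u)\rangle,
\end{equation*}
then insert $\partial_tu$ and $\partial_tP(u,u)=P(\partial_tu,u)+P(u,\partial_tu)$, and verify that by \eqref{formula-m} the $\partial_xu$-parts of $\partial_tu$ (the cubic contribution) annihilate each other. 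What remains is a finite collection of quartic-in-$u$ integrals which in frequency space take the form $\int_{\mathbb{R}^3}\mu(\xi,\eta,\sigma)\,\mathrm{d}\mathrm{Q}(u)$ for various multipliers $\mu$ built from $m$ and polynomial factors in $\xi,\eta,\sigma,\xi-\eta,\eta-\sigma$.

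These quartic integrals split into \emph{low-order} and \emph{high-order} parts. The low-order pieces are those in which fewer than two factors carry $k$-th order derivatives, or in which the singularities of $m$ (as controlled by Proposition \ref{prop-m}) supply enough negative powers that Hölder's inequality together with the embedding $\norm{v}_{L^\infty}\lesssim\norm{v}_{H^1}$ suffices to absorb them into $\norm{u}_{H^{2+\frac{\alpha}{2}}}^2\norm{u}_{H^{k+\frac{\alpha}{2}}}^2+\norm{u}_{H^{k}}^4$. After this reduction the only remaining obstructions are the two high-order expressions $F_{1,0}$ and $G_{1,0}$ displayed in the introduction.

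Following \cite{EW2018}, the next step is to establish the near-identity $G_{1,0}\equiv 2F_{1,0}$, where $\equiv$ denotes equality modulo a term already bounded by $\norm{u}_{H^{2+\frac{\alpha}{2}}}^2\norm{u}_{H^{k+\frac{\alpha}{2}}}^2$. This follows from suitable changes of variables on $\mathbb{R}^3$ combined with the symmetry $m(\xi-\eta,\eta)=m(\eta,\xi-\eta)$ and the functional relation \eqref{functional-rel}, which converts the $\eta(1+\abs{\xi}^\alpha)$-factor of $G_{1,0}$ into a $\xi(1+\abs{\eta}^\alpha)$-factor aligning with $F_{1,0}$. I would then decompose $\mathbb{R}^3=\mathcal{A}_1\cup\mathcal{A}_2\cup\mathcal{A}_2^c$, where $\mathcal{A}_1\cup\mathcal{A}_2$ is the region on which $\abs{\xi}\lesssim\abs{\xi-\eta}+\abs{\eta-\sigma}$. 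On $\mathcal{A}_1\cup\mathcal{A}_2$ a factor of $\xi$ can be transferred onto the low-regularity arguments $\hat{u}(\xi-\eta)$ or $\hat{u}(\eta-\sigma)$, so that Proposition \ref{prop-m} and a direct Hölder argument yield $\abs{F_{1,0}}+\abs{G_{1,0}}\lesssim\norm{u}_{H^{2+\frac{\alpha}{2}}}^2\norm{u}_{H^{k+\frac{\alpha}{2}}}^2$.

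The main obstacle lies on $\mathcal{A}_2^c$, where the frequency transfer is unavailable and any naive bound costs one derivative. To recover it I would use that on $\mathcal{A}_2^c$ the combined symbol of $G_{1,0}+2F_{1,0}$ carries a commutator structure: after aligning the two multipliers through the identity above, differences of the shape $\xi^k-\sigma^k=(\xi-\sigma)(\xi^{k-1}+\cdots+\sigma^{k-1})$ and $\xi^k-(\eta-\sigma)^k$ emerge and release the missing derivative onto $\hat{u}(\xi-\eta)$ or $\hat{u}(\eta-\sigma)$. This yields $\abs{G_{1,0}+2F_{1,0}}\lesssim\norm{u}_{H^{2+\frac{\alpha}{2}}}^2\norm{u}_{H^{k+\frac{\alpha}{2}}}^2$ on $\mathcal{A}_2^c$, whereupon the near-identity $G_{1,0}\equiv 2F_{1,0}$ separates this into the same bound for $F_{1,0}$ and $G_{1,0}$ individually. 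Summing every contribution then gives \eqref{kmain-en-est}. The delicate verification that the commutator differences really extract the missing derivative uniformly on $\mathcal{A}_2^c$—and that the remainder in $G_{1,0}-2F_{1,0}$ is itself of the asserted order—is the only genuinely nontrivial point.
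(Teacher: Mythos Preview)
Your overall architecture matches the paper: differentiate $\mathrm{E}_k$, use the normal-form cancellation to kill the cubic contribution, reduce the quartic terms to $F_{1,0}$ and $G_{1,0}$, establish $G_{1,0}\equiv 2F_{1,0}$, and split $\mathbb{R}^3=\mathcal{A}_1\cup\mathcal{A}_2\cup\mathcal{A}_2^c$. There are, however, two genuine gaps.

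First, you omit $G_k$. After the decomposition the paper arrives at (see \eqref{en-est2}) the surviving high-order terms are $F_{1,0}$, $G_{1,0}$ \emph{and} $G_k$; the latter also carries one too many derivatives for a direct estimate and is handled separately in Lemma~\ref{G_k-est} via an integration-by-parts/functional-relation argument that shows $G_k$ equals half of a more benign integral. Your reduction to ``only $F_{1,0}$ and $G_{1,0}$'' is therefore incomplete.

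Second, and more seriously, your description of the commutator mechanism on $\mathcal{A}_2^c$ is not the one that works. After the change of variables the combined multiplier of $2F_{1,0}+G_{1,0}$ is
\[
\Bigl[m(\xi-\eta,\sigma)\sigma-\tfrac{1+\abs{\xi}^\alpha}{1+\abs{\eta}^\alpha}m(\xi-\eta,\eta)\eta\Bigr]\sigma^{k-1}(\xi-\eta)\xi^k,
\]
so the polynomial factors $\sigma^{k-1}(\xi-\eta)\xi^k$ are \emph{common} to both pieces; no differences of the form $\xi^k-\sigma^k$ or $\xi^k-(\eta-\sigma)^k$ arise. The gain of one derivative comes instead from the \emph{symbol} difference
\[
N(\xi,\eta,\sigma)=m(\xi-\eta,\sigma)-\tfrac{1+\abs{\xi}^\alpha}{1+\abs{\eta}^\alpha}\,m(\xi-\eta,\eta),
\]
which on $\mathcal{A}_2^c$ (where $\xi=(1+\mu)\eta$, $\sigma=(1+\nu)\eta$ with $\abs{\mu},\abs{\nu}\leq\tfrac{1}{10}$) must be shown to satisfy $\abs{N}\lesssim\eta^\alpha\bigl(1+\abs{\eta-\sigma}/\abs{\xi-\eta}\bigr)$. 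This requires inserting the explicit formula \eqref{formula-m} for $m$, clearing denominators, and carrying out a careful Taylor expansion in $\mu,\nu$ to verify that the leading terms cancel down to $\mathcal{O}(\mu^2+\mu\nu)$ (Lemma~\ref{final-est}). A purely polynomial commutator argument of the type you sketch would not detect this cancellation and would leave you one derivative short.
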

The energy inequality \eqref{introenest} then follows by summing over $k$ in \eqref{kmain-en-est} and using the fact that $\norm{(1+\abs{\mathrm{D}}^\alpha)^\frac{1}{2}u}_{L^2(\mathbb{R})}^2$ is conserved by solutions of \eqref{frac-bbm-eq}.

We first note that
\begin{align*}
\frac{1}{2}\frac{\mathrm{d}}{\mathrm{d}t}\mathrm{E}_k(t)&=\langle (1+\abs{\mathrm{D}}^\alpha)^\frac{1}{2}\partial_x^k\partial_tu,(1+\abs{\mathrm{D}}^\alpha)^\frac{1}{2}\partial_x^k u\rangle+\langle \partial_x^k\partial_tu,(1+\abs{\mathrm{D}}^\alpha)^\frac{1}{2}\partial_x^kP(u,u)\rangle\\
&\quad+ 2\langle\partial_x^ku,(1+\abs{\mathrm{D}}^\alpha)^\frac{1}{2}\partial_x^kP(\partial_tu,u)\rangle\\
&=\langle(1+\abs{\mathrm{D}}^\alpha)\partial_x^k(-(1+\abs{\mathrm{D}}^\alpha)^{-1}(\partial_xu+u\partial_xu)), \partial_x^ku\rangle\\
&\quad+\langle\partial_x^k(-(1+\abs{\mathrm{D}}^\alpha)^{-1}(\partial_xu+u\partial_xu)),(1+\abs{\mathrm{D}}^\alpha)\partial_x^kP(u,u)\rangle\\
&\quad +2\langle \partial_x^ku,(1+\abs{\mathrm{D}}^\alpha)\partial_x^kP(-(1+\abs{\mathrm{D}}^\alpha)^{-1}(\partial_xu+u\partial_xu),u)\rangle\\
&=\langle \partial_x^k(-u\partial_xu+\partial_xP(u,u)-2(1+\abs{\mathrm{D}}^\alpha)P((1+\abs{\mathrm{D}}^\alpha)^{-1}\partial_xu,u)),\partial_x^ku\rangle\\
&\quad -\langle \partial_x^k(u\partial_xu),\partial_x^kP(u,u)\rangle-2\langle \partial_x^ku,(1+\abs{\mathrm{D}}^\alpha)\partial_x^kP((1+\abs{\mathrm{D}}^\alpha)^{-1}(u\partial_xu),u)\rangle\\
&=-\langle \partial_x^k(u\partial_xu),\partial_x^kP(u,u)\rangle-2\langle \partial_x^ku,(1+\abs{\mathrm{D}}^\alpha)\partial_x^kP((1+\abs{\mathrm{D}}^\alpha)^{-1}(u\partial_xu),u)\rangle, 
\end{align*}
where we in the last equality used the definition of $m$. We decompose further by writing
\begin{align*}
-\langle \partial_x^k(u\partial_xu),\partial_x^kP(u,u)\rangle&=-\frac{1}{2}\langle\partial_x^{k+1}(u^2),\partial_x^kP(u,u)\rangle\\
&=\frac{1}{2}\langle \partial_x^k(u^2),\partial_x^{k+1}P(u,u)\rangle\\
&=\underbrace{\langle\partial_x^k(u^2),P(\partial_x^{k+1}u,u)\rangle}_{=:F_0}+\sum_{j=1}^kc_{k,j}\underbrace{\frac{1}{2}\langle\partial_x^k(u^2),P(\partial_x^{k+1-j}u,\partial_x^ju)\rangle}_{=:F_j},
\end{align*}
and
\begin{align*}
&-2\langle \partial_x^ku,(1+\abs{\mathrm{D}}^\alpha)\partial_x^kP((1+\abs{\mathrm{D}}^\alpha)^{-1}(u\partial_xu),u)\rangle\\
&\quad=-\langle\partial_x^ku,(1+\abs{\mathrm{D}}^\alpha)\partial_x^kP((1+\abs{\mathrm{D}}^\alpha)^{-1}\partial_x(u^2),u)\rangle\\
&\quad=\underbrace{-\langle \partial_x^ku,(1+\abs{\mathrm{D}}^\alpha)P((1+\abs{\mathrm{D}}^\alpha)^{-1}\partial_x^{k+1}(u^2),u)\rangle}_{=:G_0}\\
&\qquad +\sum_{j=1}^kc_{k,j}\left(\underbrace{-\langle \partial_x^ku,(1+\abs{\mathrm{D}}^\alpha)P((1+\abs{\mathrm{D}}^\alpha)^{-1}\partial_x^{k+1-j}(u^2),\partial_x^ju)\rangle}_{=:G_j} \right).
\end{align*}
Hence, we have that
\begin{equation}\label{energy-decomposition}
\frac{1}{2}\frac{\mathrm{d}}{\mathrm{d}t}\mathrm{E}_k(t)=F_0+G_0+\sum_{j=1}^kc_{k,j}\left(F_j+G_j\right).
\end{equation}
The task is now to estimate each term in \eqref{energy-decomposition}, and we start by considering the worst terms $F_0$ and $G_0$.
\begin{lemma}\label{energy-first-decomp}
For $k\geq 1$,
\begin{equation*}
\frac{1}{2}\frac{\mathrm{d}}{\mathrm{d}t}\mathrm{E}_k(t)= \sum_{j=1}^kc_{k,j}\left(F_j+G_j\right).
\end{equation*}
\end{lemma}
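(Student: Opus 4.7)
The lemma reduces to showing $F_0+G_0=0$, since in the decomposition \eqref{energy-decomposition} the remaining $j=0$ terms are precisely $F_0$ and $G_0$. The plan is to pass to Fourier space, use the functional relation \eqref{functional-rel} to transform $G_0$'s kernel, and then exploit the $S_4$-symmetry of the four-linear integrand $\hat u(\xi_1)\hat u(\xi_2)\hat u(\xi_3)\hat u(\xi_4)$.

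Concretely, using Plancherel together with $\widehat{u^2}(\zeta)=(2\pi)^{-1}\int\hat u(\zeta-\sigma)\hat u(\sigma)\,d\sigma$, I would write both $F_0$ and $G_0$ as four-linear integrals $\iiiint K(\xi_1,\xi_2,\xi_3,\xi_4)\hat u(\xi_1)\hat u(\xi_2)\hat u(\xi_3)\hat u(\xi_4)\,\delta(\xi_1+\xi_2+\xi_3+\xi_4)\,d^4\xi$. From the definitions one reads off
\[
K_F = (i(\xi_1+\xi_2))^k(i\xi_3)^{k+1}m(\xi_3,\xi_4),
\]
while $K_G$ initially takes the bulkier form
\[
K_G = (-1)^k(i\xi_1)^k(1+|\xi_1|^\alpha)\frac{m(\xi_1+\xi_4,-\xi_4)(i(\xi_1+\xi_4))^{k+1}}{1+|\xi_1+\xi_4|^\alpha},
\]
in which $\xi_1$ is the frequency paired with $\partial_x^k u$. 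Next, I would apply \eqref{functional-rel} with the substitution $\xi=-\xi_1$, $\eta=\xi_2+\xi_3$ (so $\xi-\eta=\xi_4$ by the constraint) and invoke the symmetries $m(X,Y)=m(Y,X)$ and $m(-X,-Y)=m(X,Y)$ to obtain
\[
m(\xi_1+\xi_4,-\xi_4) = -\frac{\xi_1\,m(\xi_1,\xi_4)(1+|\xi_1+\xi_4|^\alpha)}{(\xi_1+\xi_4)(1+|\xi_1|^\alpha)}.
\]
Substituting, the $(1+|\cdot|^\alpha)$ factors in $K_G$ cancel entirely and one obtains the much simpler kernel $K_G=-(-1)^k(i\xi_1)^{k+1}(i(\xi_1+\xi_4))^k m(\xi_1,\xi_4)$.

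Because the integrand $\hat u(\xi_1)\hat u(\xi_2)\hat u(\xi_3)\hat u(\xi_4)$ is symmetric under any permutation of the four frequencies, the integrals depend only on the $S_4$-symmetrizations of $K_F$ and $K_G$. Both can be parametrized by the choice of an unordered pair (for the ``$k$-exponent sum'' slot) together with an ordered pair from the complement (for the ``$(k{+}1)$-exponent'' and the $m$-arguments); using $\xi_1+\xi_2+\xi_3+\xi_4=0$ to convert $(i(\xi_i+\xi_j))^k$ to $(-1)^k(i(\xi_k+\xi_l))^k$ for complementary pairs, a short combinatorial calculation shows that both symmetrizations reduce to the common sum $T=\sum_{\{w,z\}}(i(w+z))^k\bigl[(iw)^{k+1}+(iz)^{k+1}\bigr]m(w,z)$, with opposite-sign prefactors $2(-1)^k$ for $K_F$ and $2(-1)^{k+1}$ for $K_G$. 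Hence $K_F^{\mathrm{sym}}+K_G^{\mathrm{sym}}=0$, which gives $F_0+G_0=0$. The main technical difficulty is the careful bookkeeping of signs, complex conjugations (arising from $u$ real, so $\overline{\hat u(\xi)}=\hat u(-\xi)$), and the combinatorics of the symmetrization; the functional relation \eqref{functional-rel} is indispensable, since without it the kernel of $G_0$ features $m$ at a sum-and-single pair of arguments while that of $F_0$ features $m$ at two single frequencies, and no permutation of arguments can interchange these two forms.
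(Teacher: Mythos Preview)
Your argument is correct, but it is considerably more elaborate than the paper's. The paper never expands $\widehat{u^2}$ into a convolution; it keeps $G_0$ as a two-variable integral in $(\xi,\eta)$, performs the swap $\xi\leftrightarrow\eta$, applies the functional relation \eqref{functional-rel} once to convert $(1+|\eta|^\alpha)m(\eta-\xi,\xi)(1+|\xi|^\alpha)^{-1}(\mathrm{i}\xi)^{k+1}$ into $m(\xi-\eta,\eta)(\mathrm{i}\xi)^k\overline{(\mathrm{i}\eta)}$, and then uses the reflection $(\xi,\eta)\mapsto-(\xi,\eta)$ together with $\overline{\hat u(\zeta)}=\hat u(-\zeta)$ to recognise the result as $-F_0$ directly. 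In effect the paper exploits only the $S_2$-symmetry between the $\partial_x^k u$ and $\partial_x^{k+1}u$ slots (plus realness), and never needs to symmetrise over the two frequencies hidden in $\widehat{u^2}$.

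Your route---passing to a genuine four-linear form and invoking full $S_4$-symmetrisation---reaches the same conclusion and has the merit of being systematic (once $K_G$ is reduced via \eqref{functional-rel}, the cancellation is purely combinatorial). The cost is the extra bookkeeping of signs, conjugations, and the counting argument for the symmetrisation, all of which the paper sidesteps. For this particular lemma the paper's two-variable argument is shorter; your four-variable framework would pay off more if one needed to treat several such identities uniformly, or if the $u^2$ factor were replaced by a less symmetric bilinear expression.
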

\begin{proof}
Using the same methods as in the proof of Lemma \ref{equivalence}, we find that
\begin{align*}
G_0&=-\langle \partial_x^ku,(1+\abs{\mathrm{D}}^\alpha)P((1+\abs{\mathrm{D}}^\alpha)^{-1}\partial_x^{k+1}(u^2),u)\rangle\\
&=-\int_{\mathbb{R}^2}(1+\abs{\xi}^\alpha)m(\xi-\eta,\eta)(1+\abs{\eta}^\alpha)^{-1}(\mathrm{i}\eta)^{k+1}\FF(u^2)(\eta)\hat{u}(\xi-\eta)\overline{(\mathrm{i}\xi)^k\hat{u}(\xi)}\ \mathrm{d}\eta\ \mathrm{d}\xi\\
&=-\int_{\mathbb{R}^2}(1+\abs{\eta}^\alpha)m(\eta-\xi,\xi)(1+\abs{\xi}^\alpha)^{-1}(\mathrm{i}\xi)^{k+1}\FF(u^2)(\xi)\hat{u}(\eta-\xi)\overline{(\mathrm{i}\eta)^k\hat{u}(\eta)}\ \mathrm{d}\eta\ \mathrm{d}\xi\\
&=-\int_{\mathbb{R}^2}m(\xi-\eta,\eta)(\mathrm{i}\xi)^k\FF(u^2)(\xi)\hat{u}(\eta-\xi)\overline{(\mathrm{i}\eta)^{k+1}\hat{u}(\eta)}\ \mathrm{d}\eta\ \mathrm{d}\xi\\
&=-\int_{\mathbb{R}^2}m(\xi-\eta,\eta)(\mathrm{i}\eta)^{k+1}\hat{u}(\eta)\hat{u}(\xi-\eta)\overline{(\mathrm{i}\xi)^k\FF(u^2)(\xi)}\ \mathrm{d}\eta\ \mathrm{d}\xi\\
&=-F_0
\end{align*}
\end{proof}
Before continuing to estimate the remaining terms, we note that $F_1=F_k$ and we can also relate $F_1$ and $G_1$:
\begin{lemma}\label{F_1-G_1}
For $k\geq 1$,
\begin{equation*}
G_1=2F_1+\mathcal{O}(\norm{u}_{H^2(\mathbb{R})}\norm{u}_{H^k(\mathbb{R})}^3).
\end{equation*}
\end{lemma}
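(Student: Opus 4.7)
The argument follows the pattern of the proof of Lemma~\ref{energy-first-decomp}, but now must accommodate the extra $u^2$ factor appearing in both $F_1$ and $G_1$.

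I first expand $\partial_x^k(u^2) = 2u\partial_x^k u + R_k$, where $R_k := \sum_{j=1}^{k-1}\binom{k}{j}\partial_x^j u\,\partial_x^{k-j}u$. Substituting into $F_1$ and $G_1$ decomposes each into a leading piece and a cross remainder: $2F_1 = L_F + E_F$ and $G_1 = L_G + E_G$, where
\begin{align*}
L_F &= 2\langle u\partial_x^k u,\, P(\partial_x^k u,\partial_x u)\rangle,\\
L_G &= -2\langle\partial_x^k u,\,(1+\abs{\mathrm{D}}^\alpha)\, P((1+\abs{\mathrm{D}}^\alpha)^{-1}(u\partial_x^k u),\partial_x u)\rangle.
\end{align*}
The lemma then reduces to the two claims (i) $\abs{E_F}+\abs{E_G}\lesssim \norm{u}_{H^2}\norm{u}_{H^k}^3$, and (ii) $L_G - L_F = \mathcal{O}(\norm{u}_{H^2}\norm{u}_{H^k}^3)$.

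For (i), each summand of $E_F$ (respectively $E_G$) contains the pointwise product $\partial_x^j u\cdot \partial_x^{k-j}u$ with $1\le j\le k-1$; at least one of the two factors carries at most $\lfloor k/2\rfloor$ derivatives and can be placed in $L^\infty$ via the one-dimensional Sobolev embedding $H^{1+\delta}\hookrightarrow L^\infty$ at the cost of $\norm{u}_{H^2}$. Combined with the growth bound $\abs{m(\xi-\eta,\eta)}\lesssim \abs{\xi-\eta}/\abs{\eta}+\abs{\eta}/\abs{\xi-\eta}$ from Proposition~\ref{prop-m}, the 4-linear forms in $E_F$ and $E_G$ are bounded by $\norm{u}_{H^2}\norm{u}_{H^k}^3$ by a computation along the same lines as the estimates of $A_0^{1,1}$--$A_0^{1,3}$ in the proof of Lemma~\ref{equivalence}; the additional $(1+\abs{\mathrm{D}}^\alpha)^{\pm 1}$ factors appearing in $E_G$ can be absorbed because $u\in H^{k+\alpha/2}$.

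The comparison (ii) is the crux, and is carried out in Fourier. Expressing $u\partial_x^k u$ via $\mathcal{F}(u\partial_x^k u)(\zeta) = \int(\mathrm{i}\sigma)^k\hat u(\sigma)\hat u(\zeta-\sigma)\,\mathrm{d}\sigma$ recasts both $L_F$ and $L_G$ as 4-linear forms with common density $\hat u(\eta-\sigma)\hat u(\xi-\eta)\hat u(\sigma)\overline{\hat u(\xi)}$. A change of variables in $L_G$ of the type used in the proof of Lemma~\ref{energy-first-decomp}, combined with the symmetry $m(a,b)=m(b,a)$, brings the symbol of $L_G$ into a form containing the factor $(1+\abs{\xi}^\alpha)(1+\abs{\eta}^\alpha)^{-1}m(\xi-\eta,\eta)\,\eta$; the functional relation \eqref{functional-rel} then rewrites this as $-m(\eta-\xi,\xi)\,\xi$, making the symbol of $L_G$ coincide with that of $L_F$ up to a remainder in which the derivatives $(\mathrm{i}\sigma)^k$, $\mathrm{i}(\xi-\eta)$ and $\mathrm{i}\eta$ are distributed off-leading. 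By Proposition~\ref{prop-m}, this remainder is a 4-linear form obeying the same growth bounds as the ones used in step (i), and it is therefore also controlled by $\norm{u}_{H^2}\norm{u}_{H^k}^3$.

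The main obstacle is the Fourier bookkeeping in step (ii): one must pick the change of variables so that \eqref{functional-rel} produces an exact cancellation of the fractional weights $(1+\abs{\cdot}^\alpha)^{\pm 1}$ against $m$, while keeping careful track of signs, of the $\mathrm{i}^k$ factors, and of where the extra powers of $\xi-\eta$ and $\eta$ land in the 4-linear form.
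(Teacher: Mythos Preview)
Your approach is workable but takes a more complicated route than the paper. The paper never expands $\partial_x^k(u^2)$ via the Leibniz rule; it keeps $u^2$ intact throughout. First, $G_1$ is transformed directly---via the swap $\xi\leftrightarrow\eta$ and the functional relation \eqref{functional-rel}, exactly as in Lemma~\ref{energy-first-decomp}---into the closed identity $G_1=-\langle P(\partial_x^{k+1}u,\partial_xu),\partial_x^{k-1}(u^2)\rangle$. Then a single integration by parts gives
\[
2F_1=\langle\partial_x^k(u^2),P(\partial_x^ku,\partial_xu)\rangle=-\langle\partial_x^{k-1}(u^2),\partial_xP(\partial_x^ku,\partial_xu)\rangle=G_1-\langle P(\partial_x^ku,\partial_x^2u),\partial_x^{k-1}(u^2)\rangle,
\]
and only this one remainder term needs to be estimated. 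Everything stays at the level of trilinear forms in $(u,u,u^2)$; no four-linear bookkeeping is needed.

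Your Leibniz decomposition effectively reduces the lemma to what the paper later isolates as $F_{1,0}$ and (a close variant of) $G_{1,0}$; your step~(ii) is essentially the content of Lemma~\ref{F_10-G_10}, whose proof the paper omits precisely because it is modelled on the present lemma. So you are proving the simpler identity by going through the harder four-linear argument first. One point your outline glosses over: your $L_G$ carries $\mathcal{F}(u\partial_x^ku)(\eta)$ with no external factor of $\eta$, so the functional relation \eqref{functional-rel} cannot be invoked until you first write $u\partial_x^ku=\partial_x(u\partial_x^{k-1}u)-\partial_xu\,\partial_x^{k-1}u$ (equivalently $(\mathrm{i}\sigma)^k=(\mathrm{i}\eta)(\mathrm{i}\sigma)^{k-1}-\mathrm{i}(\eta-\sigma)(\mathrm{i}\sigma)^{k-1}$) to create that factor. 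This is the concrete step hidden in your phrase ``up to a remainder in which the derivatives are distributed off-leading''; once made explicit, the rest of your argument goes through.
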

\begin{proof}
Using change of variables and \eqref{functional-rel}, we find that
\begin{align}
G_1&=-\langle \partial_x^ku,(1+\abs{\mathrm{D}}^\alpha)P((1+\abs{\mathrm{D}}^\alpha)^{-1}\partial_x^k(u^2),\partial_xu)\rangle\nonumber\\
&=-\int_{\mathbb{R}^2}(1+\abs{\xi}^\alpha)m(\xi-\eta,\eta)(1+\abs{\eta}^\alpha)^{-1}(\mathrm{i}\eta)^k\FF(u^2)(\eta)\mathrm{i}(\xi-\eta)\hat{u}(\xi-\eta)\overline{(\mathrm{i}\xi)^k\hat{u}(\xi)}\ \mathrm{d}\eta\ \mathrm{d}\xi\nonumber\\
&=-\int_{\mathbb{R}^2}(1+\abs{\eta}^\alpha)m(\eta-\xi,\xi)(1+\abs{\xi}^\alpha)^{-1}(\mathrm{i}\xi)^k\FF(u^2)(\xi)\mathrm{i}(\eta-\xi)\hat{u}(\eta-\xi)\overline{(\mathrm{i}\eta)^k\hat{u}(\eta)}\ \mathrm{d}\eta\ \mathrm{d}\xi\nonumber\\
&=-\int_{\mathbb{R}^2}m(\xi-\eta,\eta)(\mathrm{i}\xi)^{k-1}\FF(u^2)(\xi)\mathrm{i}(\eta-\xi)\hat{u}(\eta-\xi)\overline{(\mathrm{i}\eta)^{k+1}\hat{u}(\eta)}\ \mathrm{d}\eta\ \mathrm{d}\xi\nonumber\\
&=-\int_{\mathbb{R}^2}m(\xi-\eta,\eta)(\mathrm{i}\eta)^{k+1}\hat{u}(\eta)\mathrm{i}(\xi-\eta)\hat{u}(\xi-\eta)\overline{(\mathrm{i}\xi)^{k-1}\FF(u^2)(\xi)}\ \mathrm{d}\eta\ \mathrm{d}\xi\nonumber\\
&=-\langle P(\partial_x^{k+1}u,\partial_xu),\partial_x^{k-1}(u^2)\rangle,\label{id-g1}
\end{align}
and 
\begin{align}
2F_1&=-\langle \partial_xP(\partial_x^{k}u,\partial_xu),\partial_x^{k-1}(u^2)\rangle\nonumber\\
&=-\langle P(\partial_x^{k+1}u,\partial_xu),\partial_x^{k-1}(u^2)\rangle-\langle P(\partial_x^ku,\partial_x^2u),\partial_x^{k-1}(u^2)\rangle\nonumber\\
&=G_1-\langle P(\partial_x^ku,\partial_x^2u),\partial_x^{k-1}(u^2)\rangle,\label{id-f1}
\end{align}
where we in the last step used \eqref{id-g1}. Next we estimate $\langle P(\partial_x^ku,\partial_x^2u),\partial_x^{k-1}(u^2)\rangle$ using \eqref{est-xi}. Just as in the proof of Lemma \ref{equivalence} it is enough to estimate the high frequencies. 
\begin{align*}
&\abs{\int_{(\xi-\eta)^2+\eta^2\geq 1}m(\xi-\eta,\eta)(\mathrm{i}\eta)^k\hat{u}(\eta)(\mathrm{i}(\xi-\eta))^2\hat{u}(\xi-\eta)\overline{(\mathrm{i}\xi)^{k-1}\FF(u^2)(\xi)}\ \mathrm{d}\eta\ \mathrm{d}\xi}\\
&\quad \int_\mathbb{R}\abs{\eta^{k-1}\hat{u}(\eta)(\xi-\eta)^3\hat{u}(\xi-\eta)\xi^{k-1}\mathcal{F}(u^2)(\xi)}\ \mathrm{d}\eta\ \mathrm{d}\xi\\
&\quad +\int_\mathbb{R}\abs{\eta^{k}\hat{u}(\eta)(\xi-\eta)^2\hat{u}(\xi-\eta)\xi^{k-1}\mathcal{F}(u^2)(\xi)}\ \mathrm{d}\eta\ \mathrm{d}\xi\\
&\quad \int_\mathbb{R}\abs{\eta^{k}\hat{u}(\eta)(\xi-\eta)\hat{u}(\xi-\eta)\xi^{k}\mathcal{F}(u^2)(\xi)}\ \mathrm{d}\eta\ \mathrm{d}\xi\\
&=:B_1+B_2+B_3
\end{align*}
For the term $B_1$ we first use the triangle inequality to get that
\begin{align*}
B_1&\leq \int_\mathbb{R}\abs{\eta^{k}\hat{u}(\eta)(\xi-\eta)^2\hat{u}(\xi-\eta)\xi^{k-1}\mathcal{F}(u^2)(\xi)}\ \mathrm{d}\eta\ \mathrm{d}\xi+\int_\mathbb{R}\abs{\eta^{k-1}\hat{u}(\eta)(\xi-\eta)^2\hat{u}(\xi-\eta)\xi^{k}\mathcal{F}(u^2)(\xi)}\ \mathrm{d}\eta\ \mathrm{d}\xi\\
&=B_2+\int_\mathbb{R}\abs{\eta^{k-1}\hat{u}(\eta)(\xi-\eta)^2\hat{u}(\xi-\eta)\xi^{k}\mathcal{F}(u^2)(\xi)}\ \mathrm{d}\eta\ \mathrm{d}\xi,
\end{align*}
and
\begin{align*}
\int_\mathbb{R}\abs{\eta^{k-1}\hat{u}(\eta)(\xi-\eta)^2\hat{u}(\xi-\eta)\xi^{k}\mathcal{F}(u^2)(\xi)}\ \mathrm{d}\eta\ \mathrm{d}\xi&\leq \norm{\partial_x^{k-1}u\partial_x^2u}_{L^2}\norm{\partial_x^k(u^2)}_{L^2}\\
&\leq \norm{\partial_x^{k-1}u}_{L^\infty}\norm{u}_{H^2}\norm{u}_{H^k}^2\\
&\lesssim \norm{u}_{H^2}\norm{u}_{H^k}^3.
\end{align*}
In the same way we have that
\begin{align*}
B_2&\leq \norm{\partial_x^k}_{L^2}\norm{\partial_x^2\partial_x^{k-1}(u^2)}_{L^2}\lesssim \norm{u}_{H^2}\norm{u}_{H^k}^3,\\
B_3&\leq \norm{\partial_x^ku}_{L^2}\norm{\partial_xu\partial_x^k(u^2)}_{L^2}\lesssim\norm{u}_{H^2}\norm{u}_{H^k}^3.
\end{align*}
Hence,
\begin{equation*}
B_1+B_2+B_3\lesssim \norm{u}_{H^2}\norm{u}_{H^k}^3.
\end{equation*}
\end{proof}
The task is now to estimate $F_j,$ $G_j$ for $j=1,2,\ldots, k$. The problematic terms are $F_1$, $G_1$, $F_k$ and $G_k$, while $F_j$, $G_j$, $j\in \{2,3\ldots, k-1\}$ can be estimated directly. However, we first consider the special case when $k=1$. 
\begin{lemma}
For $k=1$,
\begin{equation*}
\frac{\mathrm{d}}{\mathrm{d}t}\mathrm{E}_k(t)\lesssim \norm{u}_{H^2(\mathbb{R})}\norm{u}_{H^1(\mathbb{R})}^3.
\end{equation*}
\end{lemma}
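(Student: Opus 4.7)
The plan is to apply the general decomposition \eqref{energy-decomposition} in the specific case $k=1$, use the two preceding lemmas to collapse all the quartic terms, and then close the remaining term by a direct bilinear estimate. First, by \eqref{energy-decomposition} combined with Lemma~\ref{energy-first-decomp}, the $F_0+G_0$ contribution vanishes, and for $k=1$ only the single summand $c_{1,1}(F_1+G_1)$ survives, giving
\[
\tfrac{1}{2}\tfrac{\mathrm d}{\mathrm dt}\mathrm E_1(t)=c_{1,1}(F_1+G_1).
\]
Lemma~\ref{F_1-G_1} then rewrites $G_1=2F_1+\mathcal O(\norm{u}_{H^2}\norm{u}_{H^1}^3)$, so the lemma follows once one establishes $\abs{F_1}\lesssim \norm{u}_{H^2}\norm{u}_{H^1}^3$.

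Next, I would write $F_1$ in its explicit Fourier representation: for $k=1$,
\[
F_1=\tfrac{1}{2}\int_{\mathbb R^2}m(\xi-\eta,\eta)\,\mathrm i(\xi-\eta)\,\mathrm i\eta\,\hat u(\xi-\eta)\hat u(\eta)\,\overline{\mathrm i\xi\,\FF(u^2)(\xi)}\,\mathrm d\eta\,\mathrm d\xi.
\]
The crucial structural observation is that the two derivative factors $(\xi-\eta)$ and $\eta$ appearing in the integrand cancel precisely the two singularities of $m$ identified in Proposition~\ref{prop-m}: indeed the upper bound in \eqref{est-2} gives $\abs{m(\xi-\eta,\eta)}\lesssim \tfrac{\abs{\xi-\eta}}{\abs{\eta}}+\tfrac{\abs{\eta}}{\abs{\xi-\eta}}$ on the high-frequency set, and \eqref{est-1} an even weaker bound on the compact low-frequency set, so that in either regime
\[
\abs{m(\xi-\eta,\eta)}\,\abs{\xi-\eta}\,\abs{\eta}\lesssim (\xi-\eta)^2+\eta^2+1.
\]
Inserting this into the integrand and splitting the two terms yields two contributions of the form
\[
\int_{\mathbb R^2}(\xi-\eta)^2\abs{\hat u(\xi-\eta)}\,\abs{\hat u(\eta)}\,\abs{\xi}\,\abs{\FF(u^2)(\xi)}\,\mathrm d\eta\,\mathrm d\xi,
\]
together with its mirror image in $\xi-\eta\leftrightarrow \eta$. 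By Plancherel and Cauchy--Schwarz each is bounded by $\norm{u\,\partial_x^2u}_{L^2}\norm{\partial_x(u^2)}_{L^2}$, and then Hölder combined with the one-dimensional embedding $\norm{u}_{L^\infty}\lesssim \norm{u}_{H^1}$ delivers the desired bound $\norm{u}_{H^2}\norm{u}_{H^1}^3$.

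I do not foresee a serious obstacle here. The reason the case $k=1$ is essentially immediate, compared with Proposition~\ref{en-estimates} for general $k$, is that for $k=1$ no factor in the integrand carries more than two derivatives, so the delicate commutator-type cancellations that drive the treatment of the problematic terms $F_k,G_k$ when $k\ge 2$ are simply not needed. The only routine bookkeeping is verifying that the low-frequency regime, where \eqref{est-1} gives $\abs{m}\,\abs{\xi-\eta}\,\abs{\eta}\lesssim \abs{\xi-\eta}^{2-\alpha}+\abs{\eta}^{2-\alpha}$, yields only a bounded integrand on $(\xi-\eta)^2+\eta^2\le 1$ and is thus absorbed into the same final bound.
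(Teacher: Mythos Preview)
Your proof is correct and follows essentially the same route as the paper: reduce via Lemma~\ref{energy-first-decomp} and Lemma~\ref{F_1-G_1} to the single estimate $\abs{F_1}\lesssim \norm{u}_{H^2}\norm{u}_{H^1}^3$, then close this directly using the bounds on $m$ from Proposition~\ref{prop-m}. In fact you supply more detail than the paper, which simply asserts that the last estimate is easy from \eqref{est-2}; your observation that the factors $(\xi-\eta)\eta$ cancel both singularities of $m$ and leave $\abs{m(\xi-\eta,\eta)}\abs{\xi-\eta}\abs{\eta}\lesssim (\xi-\eta)^2+\eta^2+1$ is exactly what is needed.
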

\begin{proof}
When $k=1$, we know from Lemma \ref{energy-first-decomp} that
\begin{equation*}
\frac{1}{2}\frac{\mathrm{d}}{\mathrm{d}t}\mathrm{E}_k(t)=F_1+G_1,
\end{equation*}
and we know from Lemma \ref{F_1-G_1} that $G_1=2F_1+\mathcal{O}(\norm{u}_{H^3(\mathbb{R})}\norm{u}_{H^1(\mathbb{R})}^3)$. Hence, it only remains to estimate $F_1$, and it is easy to establish, using \eqref{est-2}, that
\begin{equation*}
\abs{F_1}\lesssim \norm{u}_{H^2(\mathbb{R})}\norm{u}_{H^1(\mathbb{R})}^3.
\end{equation*}
\end{proof}
We next estimate the terms $F_j$, $G_j$, $j=2,3\ldots, k-1$ when $k\geq 2$. 
\begin{lemma}\label{F_j-G_j-est}
For $k\geq 2$ and $j=2,3\ldots, k-1$,
\begin{equation*}
\abs{F_j}+\abs{G_j}\lesssim \norm{u}_{H^k(\mathbb{R})}^4.
\end{equation*}
\end{lemma}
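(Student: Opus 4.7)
The goal is to estimate $F_j$ and $G_j$ when $j\in\{2,\dots,k-1\}$, so that every derivative load on a single factor of $u$ or $u^2$ lies strictly between $1$ and $k$. The plan is to imitate the strategy used to bound $A_0^1$ in the proof of Lemma \ref{equivalence}: write the expressions in Fourier, localise to high frequencies $(\xi-\eta)^2+\eta^2\geq 1$ (the low-frequency contribution is absorbed using \eqref{est-1} together with the vanishing of $|\xi-\eta|^{k+1-j}$ and $|\eta|^{j}$ at the singularities of $m$, noting that $j\geq 2$ and $k+1-j\geq 2$), and then apply the growth estimate \eqref{est-2}.

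In the high-frequency region, \eqref{est-2} combined with the polynomial factor $|\xi-\eta|^{k+1-j}|\eta|^j$ gives the two model weights $|\xi-\eta|^{k+2-j}|\eta|^{j-1}$ and $|\xi-\eta|^{k-j}|\eta|^{j+1}$, in which the four exponents $k+2-j$, $j-1$, $k-j$, $j+1$ all lie in $\{1,\dots,k\}$ whenever $j\in\{2,\dots,k-1\}$. This is the key point distinguishing the middle range from the endpoint cases $j=1,k$ treated separately. Each resulting integrand is a one-dimensional $\eta$-convolution of nonnegative Fourier weights paired against the dual factor $|\xi|^k|\FF(u^2)(\xi)|$ (for $F_j$) or $|\xi|^k|\hat u(\xi)|$ (for $G_j$); a Cauchy-Schwarz in $\xi$ followed by Young's inequality reduces matters to inequalities of the form
\begin{equation*}
\int(\varphi\ast\psi)(\xi)\,|\xi|^k|\hat u(\xi)|\,d\xi\;\lesssim\;\|\varphi\|_{L^p}\|\psi\|_{L^q}\|u\|_{H^k},\qquad \tfrac{1}{p}+\tfrac{1}{q}=\tfrac{3}{2},
\end{equation*}
with $p,q\in\{1,2\}$. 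I control the $L^1$-norms via the standard Cauchy-Schwarz bound $\int|\zeta|^a|\hat v(\zeta)|\,d\zeta\lesssim\|v\|_{H^{a+1/2+\varepsilon}}$ and absorb the $u^2$-factor by the bilinear Moser estimate $\|u^2\|_{H^a}\lesssim\|u\|_{L^\infty}\|u\|_{H^a}\lesssim\|u\|_{H^k}^2$ (using $k\geq 2$). This handles $F_j$ directly.

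The main subtlety, and the hardest part of the argument, is handling the additional weight $(1+|\xi|^\alpha)/(1+|\xi-\eta|^\alpha)$ appearing in $G_j$. The triangle inequality $|\xi|^\alpha\lesssim|\xi-\eta|^\alpha+|\eta|^\alpha$, valid since $0<\alpha\leq 1$, bounds it by $1+|\eta|^\alpha/(1+|\xi-\eta|^\alpha)$, after which I would split the high-frequency region into sub-regions determined by the relative order of $|\xi|$, $|\eta|$ and $|\xi-\eta|$. In the sub-regions where $|\xi-\eta|$ is the largest, the ratio is uniformly bounded and the $F_j$ analysis applies verbatim. In the remaining sub-regions, the high-frequency estimate $|\xi-\eta|^{k-j}/(1+|\xi-\eta|^\alpha)\lesssim|\xi-\eta|^{k-j-\alpha}$, together with $k-j\geq 1>\alpha$ and the complementary triangle inequality $|\eta|^\alpha\lesssim|\xi|^\alpha+|\xi-\eta|^\alpha$, allow the surplus $\alpha$ exponent to be redistributed onto the $\FF(u^2)$ slot, where it costs only $\|u^2\|_{H^{k-j+\alpha}}\lesssim\|u\|_{H^k}^2$ since $k-j+\alpha<k$. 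Summing the contributions from the different sub-regions and combining with the $F_j$ estimate yields $|F_j|+|G_j|\lesssim\|u\|_{H^k}^4$, as required.
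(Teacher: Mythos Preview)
Your treatment of $F_j$ is correct and essentially matches the paper's argument. The gap is in your handling of $G_j$.

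You try to estimate $G_j$ directly by controlling the extra weight $(1+|\xi|^\alpha)/(1+|\xi-\eta|^\alpha)$ via triangle inequalities and a sub-region decomposition. Follow your own steps in the worst case $j=k-1$, on the second model weight $|\xi-\eta|^{k-j}|\eta|^{j+1}=|\xi-\eta|\,|\eta|^{k}$. After bounding the ratio by $1+|\eta|^\alpha/(1+|\xi-\eta|^\alpha)$ and restricting to the sub-region $|\eta|>|\xi-\eta|$ (so $|\xi|\simeq|\eta|$), you invoke $|\eta|^\alpha\lesssim|\xi|^\alpha+|\xi-\eta|^\alpha$. The $|\xi-\eta|^\alpha$ piece is indeed harmless, but the $|\xi|^\alpha$ piece lands on the $\hat u(\xi)$ slot, which already carries $|\xi|^k$; the resulting $|\xi|^{k+\alpha}$ cannot be traded back to $\xi-\eta$ (that variable is the small one here) or to $\eta$ (since $|\xi|\simeq|\eta|$, trading only shifts the excess, and $j+1+\alpha+k>2k$ when $j=k-1$). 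So the redistribution does not close: you are forced into $\|u\|_{H^{k+\alpha}}$, not $\|u\|_{H^k}$. Your sentence ``redistributed onto the $\FF(u^2)$ slot, where it costs only $\|u^2\|_{H^{k-j+\alpha}}$'' glosses over exactly this leak.

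The paper avoids this difficulty entirely by a different mechanism: it swaps $\xi\leftrightarrow\eta$ and then applies the functional relation \eqref{functional-rel}, which converts $(1+|\xi|^\alpha)(1+|\eta|^\alpha)^{-1}m(\xi-\eta,\eta)\eta$ into $-m(\eta-\xi,\xi)\xi$. This removes the fractional ratio altogether; after one integration by parts the resulting integrals have the same structure as $F_j$ (with $u^2$ now at frequency $\xi$) and are estimated directly with \eqref{est-xi}. The functional relation, not a weight redistribution, is the missing ingredient in your argument for $G_j$.
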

\begin{proof}
First we consider $F_j$ and arguing as before, it is only necessary to consider the high frequencies. 
\begin{align}
&\abs{\int_{(\xi-\eta)^2+\eta^2\geq 1}m(\xi-\eta,\eta)(\mathrm{i}\eta)^{k+1-j}\hat{u}(\eta)(\mathrm{i}(\xi-\eta))^j\hat{u}(\xi-\eta)\overline{(\mathrm{i}\xi)^k\FF(u^2)(\xi)}\ \mathrm{d}\eta\ \mathrm{d}\xi}\nonumber\\
&\quad \lesssim \norm{u}_{H^{k-j+1}(\mathbb{R})}\norm{u}_{H^{j+1}(\mathbb{R})}\norm{u}_{H^k(\mathbb{R})}^2+\norm{u}_{H^{j-1}(\mathbb{R})}\norm{u}_{H^{k+2-j}(\mathbb{R})}\norm{u}_{H^k(\mathbb{R})}^2\nonumber\\
&\quad \lesssim \norm{u}_{H^k(\mathbb{R})}^4,\label{estimate-F_j}
\end{align}
where we used \eqref{est-2} in the first estimate. 

For $G_j$ we first note that
\begin{align*}
G_j&=-\int_{\mathbb{R}^2}(1+\abs{\xi}^\alpha)m(\xi-\eta,\eta)(1+\abs{\eta}^\alpha)^{-1}(\mathrm{i}\eta)^{k+1-j}\FF(u^2)(\eta)(\mathrm{i}(\xi-\eta))^j\hat{u}(\xi-\eta)\overline{(\mathrm{i}\xi)^k\hat{u}(\xi)}\ \mathrm{d}\eta\ \mathrm{d}\xi\\
&= -\int_{\mathbb{R}^2}(1+\abs{\eta}^\alpha)m(\eta-\xi,\xi)(1+\abs{\xi}^\alpha)^{-1}(\mathrm{i}\xi)^{k+1-j}\FF(u^2)(\xi)(\mathrm{i}(\eta-\xi))^j\hat{u}(\eta-\xi)\overline{(\mathrm{i}\eta)^k\hat{u}(\eta)}\ \mathrm{d}\eta\ \mathrm{d}\xi\\
&=-\int_{\mathbb{R}^2}m(\xi-\eta,\eta)(\mathrm{i}\xi)^{k-j}\FF(u^2)(\xi)(\mathrm{i}(\eta-\xi))^j\hat{u}(\eta-\xi)\overline{(\mathrm{i}\eta)^{k+1}\hat{u}(\eta)}\ \mathrm{d}\eta\ \mathrm{d}\xi\\
&=-\int_{\mathbb{R}^2}m(\xi-\eta,\eta)(\mathrm{i}\eta)^{k+1}\hat{u}(\eta)(\mathrm{i}(\xi-\eta))^j\hat{u}(\xi-\eta)\overline{(\mathrm{i}\xi)^{k-j}\FF(u^2)(\xi)}\ \mathrm{d}\eta\ \mathrm{d}\xi\\
&=\int_{\mathbb{R}^2}m(\xi-\eta,\eta)(\mathrm{i}\eta)^k\hat{u}(\eta)(\mathrm{i}(\xi-\eta))^{j+1}\hat{u}(\xi-\eta)\overline{(\mathrm{i}\xi)^{k-j}\FF(u^2)(\xi)}\ \mathrm{d}\eta\ \mathrm{d}\xi\\
&\quad +\int_{\mathbb{R}^2}m(\xi-\eta,\eta)(\mathrm{i}\eta)^k\hat{u}(\eta)(\mathrm{i}(\xi-\eta))^j\hat{u}(\xi-\eta)\overline{(\mathrm{i}\xi)^{k-j+1}\FF(u^2)(\xi)}\ \mathrm{d}\eta\ \mathrm{d}\xi
\end{align*}
and these integrals can be bounded by $\norm{u}_{H^k}^4$, using \eqref{est-xi} and arguing as in \eqref{estimate-F_j}.
\end{proof}
We continue by decomposing $F_1,F_k,G_1$ further:
\begin{align*}
F_k&=F_1\\
&=\frac{1}{2}\langle \partial_x^k(u^2)u,P(\partial_x^ku,\partial_xu)\rangle\\
&=\underbrace{\langle u\partial_x^ku,P(\partial_x^ku,\partial_xu)\rangle}_{=:F_{1,0}}+\sum_{l=1}^{k-1}c_{k,l}\frac{1}{2}\underbrace{\langle\partial_x^{k-l}u\partial_x^lu,P(\partial_x^ku,\partial_xu)\rangle}_{=:F_{1,l}},
\end{align*}
\begin{align*}
G_1&=-\langle\partial_x^ku,(1+\mathrm{D}^\alpha)P((1+\mathrm{D}^\alpha)^{-1}\partial_x^k(u^2),\partial_xu)\rangle\\
&=\underbrace{-2\langle\partial_x^ku,(1+\mathrm{D}^\alpha)P((1+\mathrm{D}^\alpha)^{-1}\partial_x(u\partial_x^{k-1}u),\partial_xu)\rangle}_{=:G_{1,0}}\\
&\quad +\sum_{l=1}^{k-2}c_{k,l}\left(\underbrace{-\langle\partial_x^ku,(1+\mathrm{D}^\alpha)P((1+\mathrm{D}^\alpha)^{-1}\partial_x(\partial_x^{k-1-l}u\partial_x^lu),\partial_xu)\rangle}_{=:G_{1,l}}\right).
\end{align*}
We start by estimating $F_{1,l}$ and $G_{1,l}$, for $l=1,2,\ldots, k-1$.
\begin{lemma}\label{F_1l-G_1l-est}
For $k\geq 2$ and $l=1,2,\ldots, k-1$,
\begin{equation*}
\abs{F_{1,l}}+\abs{G_{1,l}}\lesssim \norm{u}_{H^k(\mathbb{R})}^4.
\end{equation*}
\end{lemma}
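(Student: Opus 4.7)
The plan is to express both $F_{1,l}$ and $G_{1,l}$ as four-variable Fourier integrals via Plancherel, apply the high-frequency bound \eqref{est-2}, and estimate the resulting pieces by H\"older's inequality together with Sobolev embedding. As in Lemmas \ref{equivalence} and \ref{F_1-G_1}, the low-frequency region $\{(\xi-\eta)^2+\eta^2<1\}$ contributes trivially because the singularities of $m$ are cancelled by the polynomial factors of $\xi-\eta$ and $\eta$ in the integrand, so I focus on the high-frequency regime, where \eqref{est-2} gives
\begin{equation*}
|m(\xi-\eta,\eta)||\xi-\eta|^k|\eta|\lesssim |\xi-\eta|^{k+1}+|\xi-\eta|^{k-1}|\eta|^2.
\end{equation*}

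For $F_{1,l}$, the second summand is directly acceptable: it carries at most $k$ derivatives on $\hat u(\xi-\eta)$ and only $2$ on $\hat u(\eta)$, and Plancherel together with H\"older (placing the low-order factors in $L^\infty$) and Sobolev embedding yield a bound by $\|u\|_{H^k}^4$ for $k\ge 2$. The first summand $|\xi-\eta|^{k+1}$ is the main obstacle, exceeding the $H^k$-target by one derivative. The resolution is a frequency-redistribution: I expand
\begin{equation*}
\mathcal{F}(\partial_x^{k-l}u\,\partial_x^l u)(\xi)=\int_{\mathbb{R}}(\mathrm{i}(\xi-\tau))^{k-l}\hat u(\xi-\tau)(\mathrm{i}\tau)^l\hat u(\tau)\,\mathrm{d}\tau
\end{equation*}
and use the algebraic identity $\xi-\eta=(\xi-\tau)+\tau-\eta$, which yields $|\xi-\eta|\lesssim|\eta|+|\xi-\tau|+|\tau|$ and therefore
\begin{equation*}
|\xi-\eta|^{k+1}\lesssim|\xi-\eta|^k\bigl(|\eta|+|\xi-\tau|+|\tau|\bigr).
\end{equation*}
In each of the three resulting sub-terms the extra unit of derivative sits on one of $\hat u(\eta)$, $\hat u(\xi-\tau)$ or $\hat u(\tau)$, and since $l,k-l\in\{1,\dots,k-1\}$ none of the four factors exceeds order $k$; H\"older and Sobolev embedding then give $\|u\|_{H^k}^4$.

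For $G_{1,l}$ (with $l\in\{1,\dots,k-2\}$, since $l=k-1$ does not occur in the decomposition of $G_1$), the analogous Parseval identity produces the additional prefactor $(1+|\xi|^\alpha)/(1+|\xi-\eta|^\alpha)$. Sub-additivity $|\xi|^\alpha\lesssim|\xi-\eta|^\alpha+|\eta|^\alpha$, valid for $\alpha\in(0,1)$, gives
\begin{equation*}
\frac{1+|\xi|^\alpha}{1+|\xi-\eta|^\alpha}\lesssim 1+|\eta|^\alpha,
\end{equation*}
so $G_{1,l}$ splits into a ``$1$''-part, whose derivative bookkeeping matches $F_{1,l}$ exactly (the product $\partial_x^{k-1-l}u\,\partial_x^l u$ has total order $k-1$, compensated by the extra $\partial_x$ outside), and an ``$|\eta|^\alpha$''-part that places a fractional derivative of order $\alpha$ on the single factor $\hat u(\eta)$. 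The latter is absorbed by Sobolev embedding, $\||\mathrm{D}|^{1+\alpha}u\|_{L^\infty}\lesssim\|u\|_{H^{3/2+\alpha+\varepsilon}}\lesssim\|u\|_{H^k}$, which is valid because $k\ge 3$ whenever $G_{1,l}$ is non-vacuous.

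The main obstacle is thus the loss of one derivative in the bound from \eqref{est-2}; the redistribution trick resolves it precisely because the condition $l\ge 1$ splits the partner product into two factors of order at most $k-1$, leaving room for the extra derivative. By contrast, this trick fails for $l=0$, which is exactly why the terms $F_{1,0}$ and $G_{1,0}$ require the finer commutator analysis sketched in the introduction.
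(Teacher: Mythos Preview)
Your argument is correct and in the same spirit as the paper, which simply refers back to the techniques of Lemma~\ref{F_j-G_j-est}. The main idea in both cases is to apply the high-frequency bound \eqref{est-2}, observe that one resulting term carries one derivative too many on a single factor, and redistribute that derivative onto the remaining factors using the triangle inequality; the hypothesis $1\le l\le k-1$ guarantees that the partner product has room to absorb it. Your explicit redistribution $|\xi-\eta|\lesssim|\eta|+|\xi-\tau|+|\tau|$ makes this step transparent.

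One point of difference: for $G_{1,l}$ the paper's template (the $G_j$ part of Lemma~\ref{F_j-G_j-est}) handles the fractional prefactor by means of a change of variables together with the functional relation \eqref{functional-rel}, which removes the factor $(1+|\xi|^\alpha)/(1+|\cdot|^\alpha)$ altogether before estimating. You instead bound this prefactor directly by $1+|\eta|^\alpha$ via sub-additivity. Your route is more elementary and avoids any algebraic manipulation of $m$, at the modest cost of an extra fractional weight that you absorb by Sobolev embedding using $k\ge 3$. Both approaches are valid here.

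A small slip in bookkeeping: after applying \eqref{est-2} to $|m(\xi-\eta,\eta)||\xi-\eta||\eta|$, the surviving power of $|\eta|$ on $\hat u(\eta)$ is $0$ or $2$, not $1$, so in the ``$|\eta|^\alpha$''-part the relevant embedding is either $\||\mathrm D|^{\alpha}u\|_{L^\infty}\lesssim\|u\|_{H^k}$ or $\||\mathrm D|^{2+\alpha}u\|_{L^2}\lesssim\|u\|_{H^k}$ rather than $\||\mathrm D|^{1+\alpha}u\|_{L^\infty}$. This does not affect the conclusion, since both hold for $k\ge 3$ and $\alpha<1$.
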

\begin{proof}
This inequality can be established using the same techniques as in the proof of Lemma \ref{F_j-G_j-est}.
\end{proof}
Combining Lemmata \ref{F_j-G_j-est}, \ref{F_1l-G_1l-est} we immediately get, for $k\geq 2$
\begin{equation}\label{en-est2}
\frac{\mathrm{d}}{\mathrm{d}t}\mathrm{E}_k(t)\lesssim k(F_{1,0}+G_{1,0})+F_{1,0}+G_{k}+\norm{u}_{H^k(\mathbb{R})}^4,
\end{equation}
Hence it remains to estimate $F_{1,0}$, $G_{1,0}$ and $G_k$. The first two terms cannot be estimated in a straightforward way, due to the fact that there are to many derivatives on $u$. The idea is therefore to first consider $2F_{1,0}+G_{1,0}$. The reason for having a factor $2$ in front of $F_{1,0}$ is due to the $2$ appearing in the definition of $G_{1,0}$. By considering $2F_{1,0}+G_{1,0}$ we get a good commutator, in the sense that derivatives are canceled, that we are able to estimate. Moreover, the following lemma ensures that if $\abs{2F_{1,0}+G_{1,0}}\lesssim \norm{u}_{H^{2+\frac{\alpha}{2}}}^2\norm{u}_{H^{k+\frac{\alpha}{2}}}^2$, then $\abs{F_{1,0}},\abs{G_{1,0}}\lesssim \norm{u}_{H^{2+\frac{\alpha}{2}}}^2\norm{u}_{H^{k+\frac{\alpha}{2}}}^2$.
\begin{lemma}\label{F_10-G_10}
For $k\geq 1$,
\begin{equation*}
G_{1,0}=2F_{1,0}+\mathcal{O}(\norm{u}_{H^2}^2\norm{u}_{H^k}^2).
\end{equation*}
\end{lemma}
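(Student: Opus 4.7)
The plan is to identify $G_{1,0}-2F_{1,0}$ as an explicit remainder integral whose integrand carries an extra factor of a difference frequency ($\xi-\eta$ or $\eta-\sigma$), and then to bound this remainder using Proposition~\ref{prop-m}. Starting from the Fourier representation of $G_{1,0}$ displayed in the introduction, I first apply the functional relation \eqref{functional-rel} in the form
\[
\frac{(1+\abs{\xi}^\alpha)m(\xi-\eta,\eta)\eta}{1+\abs{\eta}^\alpha} = -m(\eta-\xi,\xi)\xi,
\]
which eliminates the $(1+\abs{\xi}^\alpha)/(1+\abs{\eta}^\alpha)$ factor and rewrites
\[
G_{1,0} = 2\mathrm{i}\int m(\eta-\xi,\xi)\,\xi\,(\mathrm{i}\sigma)^{k-1}\mathrm{i}(\xi-\eta)\overline{(\mathrm{i}\xi)^k}\,\mathrm{d}\mathrm{Q}(u).
\]
I then decompose $\xi=\sigma+(\xi-\sigma)$, writing $G_{1,0}=I_\sigma+I_{\xi-\sigma}$. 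The factor $\sigma\cdot(\mathrm{i}\sigma)^{k-1}$ collapses to $-\mathrm{i}(\mathrm{i}\sigma)^k$ inside $I_\sigma$, giving
\[
I_\sigma = 2\int m(\eta-\xi,\xi)(\mathrm{i}\sigma)^k\mathrm{i}(\xi-\eta)\overline{(\mathrm{i}\xi)^k}\,\mathrm{d}\mathrm{Q}(u).
\]

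The central observation is then that $I_\sigma = 2F_{1,0}$ holds as an exact equality. To see this, apply the change of variables $(\sigma,\eta,\xi)\mapsto(-\xi,-\eta,-\sigma)$, whose absolute Jacobian equals one. Three ingredients combine: (i) the reflection symmetry $m(-a,-b)=m(a,b)$, which is immediate from \eqref{formula-m}; (ii) the invariance of $\mathrm{d}\mathrm{Q}(u)$ under the substitution (after relabeling the factors), using $\hat u(-t)=\overline{\hat u(t)}$ for real $u$ to shuffle conjugates between the four factors; and (iii) the algebraic identity $(\mathrm{i}\xi)^k\overline{(\mathrm{i}\sigma)^k}=(\mathrm{i}\sigma)^k\overline{(\mathrm{i}\xi)^k}=\xi^k\sigma^k$ together with the sign cancellation $(-1)^k\cdot(-1)^k=1$ coming from the two negated powers. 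Putting these together, the substituted integrand is exactly that of $2F_{1,0}$, so that $G_{1,0}-2F_{1,0}=I_{\xi-\sigma}$.

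It remains to estimate the residual
\[
I_{\xi-\sigma}=2\mathrm{i}\int m(\eta-\xi,\xi)(\xi-\sigma)(\mathrm{i}\sigma)^{k-1}\mathrm{i}(\xi-\eta)\overline{(\mathrm{i}\xi)^k}\,\mathrm{d}\mathrm{Q}(u).
\]
Writing $\xi-\sigma=(\xi-\eta)+(\eta-\sigma)$ splits $I_{\xi-\sigma}$ into two integrals in which the extra frequency factor lives on $\hat u(\xi-\eta)$ or $\hat u(\eta-\sigma)$ rather than on $\hat u(\xi)$. In the favourable regime of Proposition~\ref{prop-m}, namely $\abs{m(\eta-\xi,\xi)}\lesssim\abs{\xi-\eta}/\abs{\xi}$, Parseval, Hölder, and the Sobolev embeddings $H^2\hookrightarrow L^\infty$ and $H^k\hookrightarrow W^{k-1,\infty}$ in one dimension yield the bound $\norm{u}_{H^2}^2\norm{u}_{H^k}^2$. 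The main obstacle is the opposite regime $\abs{m(\eta-\xi,\xi)}\lesssim\abs{\xi}/\abs{\xi-\eta}$, where the naive pointwise estimate loses one derivative and introduces a spurious factor of $\norm{u}_{H^{k+1}}$; this is absorbed by returning to physical space and performing one integration by parts to move the excess derivative off $\partial_x^{k+1}u$ onto the remaining three factors, after which $H^2\hookrightarrow W^{1,\infty}$ and Hölder complete the estimate within the budget $\norm{u}_{H^2}^2\norm{u}_{H^k}^2$.
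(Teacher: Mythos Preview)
Your overall architecture coincides with the paper's (which refers back to Lemma~\ref{F_1-G_1}): apply the functional relation \eqref{functional-rel} and a change of variables to match the leading part of $G_{1,0}$ with $2F_{1,0}$, and then estimate a remainder carrying an extra difference-frequency factor. Your identity $I_\sigma=2F_{1,0}$ via the substitution $(\sigma,\eta,\xi)\mapsto(-\xi,-\eta,-\sigma)$ is correct and is the Fourier-side counterpart of the manipulations in Lemma~\ref{F_1-G_1}.

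There is, however, a genuine gap in your treatment of the remainder $I_{\xi-\sigma}$. The ``bad regime'' you isolate, $\abs{m(\eta-\xi,\xi)}\lesssim\abs{\xi}/\abs{\xi-\eta}$, is not a piece of the actual integrand but one half of a pointwise \emph{upper bound} for $\abs{m}$. Once you have passed to absolute values there is no operator with symbol $\abs{\xi}/\abs{\xi-\eta}$ sitting inside $I_{\xi-\sigma}$, hence no $\partial_x^{k+1}u$ on which to ``return to physical space and integrate by parts''. That step, as written, is not applicable.

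The fix is purely algebraic and is exactly what the paper does via \eqref{est-xi}. Instead of invoking integration by parts, refine the bad piece by the triangle inequality,
\[
\frac{\abs{\xi}}{\abs{\xi-\eta}}\leq 1+\frac{\abs{\eta}}{\abs{\xi-\eta}}\leq 1+\frac{\abs{\sigma}}{\abs{\xi-\eta}}+\frac{\abs{\eta-\sigma}}{\abs{\xi-\eta}},
\]
which trades the excess factor of $\abs{\xi}$ for $\abs{\sigma}$ and difference frequencies. After this substitution and your split $\xi-\sigma=(\xi-\eta)+(\eta-\sigma)$, every term in the bound for $\abs{I_{\xi-\sigma}}$ carries at most $\abs{\sigma}^k\abs{\xi}^k$ on the high-frequency factors and at most two derivatives in total on the $(\xi-\eta)$ and $(\eta-\sigma)$ factors; H\"older and the embedding $H^1(\R)\hookrightarrow L^\infty(\R)$ then give the desired bound $\norm{u}_{H^2}^2\norm{u}_{H^k}^2$ directly, with no integration by parts needed.
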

\begin{proof}
The proof is very similar to the proof of Lemma \ref{F_1-G_1}, and is therefore omitted.
\end{proof}
We point out here that Lemma \ref{F_10-G_10} corresponds to \cite[Lemma 4.7]{EW2018}, but is more general, since we do not make any restriction on the domain of integration.
We now proceed to estimate $2F_{1,0}+G_{1,0}$, using the same strategy as in \cite{EW2018}. We first rewrite both $F_{1,0}$ and $G_{1,0}$.
\begin{align*} 
F_{1,0}&=\int_{\mathbb{R}^3}m(\eta-\sigma,\sigma)(\mathrm{i}\sigma)^k\mathrm{i}(\eta-\sigma)\overline{(\mathrm{i}\xi)^k}\ \mathrm{d}\mathrm{Q}(u),\\
G_{1,0}&=-2\int_{\mathbb{R}^3}\frac{1+\abs{\xi}^\alpha}{1+\abs{\eta}^\alpha}m(\xi-\eta,\eta)\mathrm{i}\eta(\mathrm{i}\sigma)^{k-1}\mathrm{i}(\xi-\eta)\overline{(\mathrm{i}\xi)^k}\ \mathrm{d}\mathrm{Q}(u),
\end{align*}
where
\begin{equation*}
\mathrm{d}\mathrm{Q}(u)=\hat{u}(\eta-\sigma)\hat{u}(\xi-\eta)\hat{u}(\sigma)\overline{\hat{u}(\xi)}.
\end{equation*}
The next step is to decompose $\mathbb{R}^3$, but before doing this we  make the change of variable $\eta\mapsto \eta-\xi+\sigma$ in $F_{1,0}$, so that
\begin{equation*}
F_{1,0}=\int_{\mathbb{R}^3}m(\xi-\eta,\sigma)(\mathrm{i}\sigma)^k\mathrm{i}(\xi-\eta)\overline{(\mathrm{i}\xi)^k}\ \mathrm{d}\mathrm{Q}(u).
\end{equation*}
This differs from the approach taken in \cite{EW2018} where the change of variables is performed after the decomposition. The benefit of doing it before is that there is no need for the technical lemma \cite[Lemma 4.9]{EW2018}, however the downside is that the proof of Lemma \ref{final-est} becomes slightly more involved.
Next we decompose $\mathbb{R}^3$, starting with the set
\begin{equation*}
\mathcal{A}_1:=\{(\xi,\eta,\sigma)\in \mathbb{R}^3\colon\min\{\abs{\xi},\abs{\eta},\abs{\sigma}\}<1\}.
\end{equation*}
For convenience we introduce the notation $\mathcal{A}_1F_{1,0}$, $\mathcal{A}_1G_{1,0}$ to indicate that the integrals are taken over $\mathcal{A}_1$. The elements of $\mathcal{A}_1$ satisfy
\begin{equation}\label{transfer-der}
\abs{\xi}+\abs{\eta}+\abs{\sigma}\lesssim 1+\abs{\xi-\eta}+\abs{\eta-\sigma},
\end{equation}
and this allows us to move factors of $\xi$, $\eta$ and $\sigma$ to $\xi-\eta$ and $\eta-\sigma$ which makes it possible to estimate $\mathcal{A}_1F_{1,0}$, $\mathcal{A}_1G_{1,0}$ directly.
\begin{lemma}\label{a01-est}
The integrals $\mathcal{A}_1F_{1,0}$, $\mathcal{A}_1G_{1,0}$ satisfy
\begin{equation*}
\abs{\mathcal{A}_1F_{1,0}}\lesssim \norm{u}_{H^2(\mathbb{R})}^2\norm{u}_{H^k(\mathbb{R})}^2,\ \quad \abs{\mathcal{A}_1G_{1,0}}\lesssim \norm{u}_{H^{2+\frac{\alpha}{2}}(\mathbb{R})}^2\norm{u}_{H^{k+\frac{\alpha}{2}}(\mathbb{R})}^2,\ \quad k\geq 2.
\end{equation*}
\end{lemma}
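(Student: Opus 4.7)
The plan is to exploit the transfer inequality \eqref{transfer-der}, which holds on $\mathcal{A}_1$, in combination with the symbol bound from Proposition \ref{prop-m}. The key observation is that on $\mathcal{A}_1$ one has $\abs{\xi}^j,\,\abs{\eta}^j,\,\abs{\sigma}^j\lesssim 1+\abs{\xi-\eta}^j+\abs{\eta-\sigma}^j$ for every $j\geq 0$, so the high-order factors $\abs{\xi}^k$, $\abs{\sigma}^{k-1}$ and (for $G_{1,0}$) $\abs{\eta}$ that appear in the integrands can be transferred onto the difference frequencies $\xi-\eta$ and $\eta-\sigma$, which correspond to the otherwise non-differentiated Fourier factors $\hat u(\xi-\eta)$ and $\hat u(\eta-\sigma)$. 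Simultaneously, the explicit factors $i(\xi-\eta)$, $(i\sigma)^{k-1}$ and $i\eta$ cancel the singularities of $m$ at $\xi-\eta=0$, $\sigma=0$ and $\eta=0$, so that Proposition \ref{prop-m} provides a purely polynomial pointwise bound on the full symbol.

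For $\mathcal{A}_1F_{1,0}$, after bounding $\abs{m(\xi-\eta,\sigma)}\abs{\sigma}\abs{\xi-\eta}$ by a polynomial of bounded degree in $\abs{\xi-\eta},\abs{\sigma}$ via Proposition \ref{prop-m} and then applying the transfer, the integrand is dominated by a finite sum of terms of the form
\begin{equation*}
\abs{\xi-\eta}^{a}\abs{\eta-\sigma}^{b}\abs{\sigma}^{c}\abs{\xi}^{d}\,\abs{\hat u(\eta-\sigma)\hat u(\xi-\eta)\hat u(\sigma)\hat u(\xi)},
\end{equation*}
where in each term at most two of the four Fourier factors carry a weight exceeding a constant. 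Switching to the variables $\mu=\eta-\sigma$, $\nu=\xi-\eta$, applying Plancherel, and then H\"older's inequality together with the Sobolev embedding $H^{1+\delta}\hookrightarrow L^\infty$ to place the two low-weight factors in $L^\infty$ and the two high-weight factors in $L^2$, each such term is bounded by $\norm{u}_{H^2}^2\norm{u}_{H^k}^2$, as claimed.

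The treatment of $\mathcal{A}_1G_{1,0}$ follows the same pattern, with one extra ingredient needed to handle the ratio $(1+\abs{\xi}^\alpha)/(1+\abs{\eta}^\alpha)$. Using the subadditivity of $x\mapsto x^\alpha$ for $\alpha<1$, namely $\abs{\xi}^\alpha\leq\abs{\xi-\eta}^\alpha+\abs{\eta}^\alpha$, one obtains
\begin{equation*}
\frac{1+\abs{\xi}^\alpha}{1+\abs{\eta}^\alpha}\ \lesssim\ 1+\abs{\xi-\eta}^\alpha,
\end{equation*}
and the residual weight $\abs{\xi-\eta}^\alpha$ is distributed, via a fractional-product (Kato--Ponce type) estimate after Plancherel, so that an extra $\alpha/2$ is placed on each of the two high-weight Sobolev factors, yielding the bound $\norm{u}_{H^{2+\alpha/2}}^2\norm{u}_{H^{k+\alpha/2}}^2$. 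The main obstacle will be the combinatorial bookkeeping: Proposition \ref{prop-m} distinguishes low- from high-frequency behaviour of $m$, and the transfer of the polynomial factors produces numerous branches; in each branch one must verify that the weight distribution is compatible with the claimed bound. Each individual branch, however, reduces to a routine H\"older--Sobolev computation.
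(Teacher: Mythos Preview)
Your approach is correct and is essentially a fleshed-out version of the paper's own proof, which simply says to use the transfer inequality \eqref{transfer-der} together with Proposition~\ref{prop-m}. Your additional handling of the ratio $(1+\abs{\xi}^\alpha)/(1+\abs{\eta}^\alpha)$ via subadditivity and the concluding H\"older--Sobolev bookkeeping are exactly the details the paper leaves implicit.
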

\begin{proof}
Using \eqref{transfer-der} we can transfer factors of $\xi,\ \eta$ and $\sigma$ to $\eta$ and $\sigma$ to $\xi-\eta$ and $\eta-\sigma$ as needed, allowing us to estimate $\mathcal{A}_1F_{1,0}$, $\mathcal{A}_1G_{1,0}$ using Proposition \ref{prop-m}.
\end{proof}
The next step is to estimate $2\mathcal{A}_1^cF_{1,0}+\mathcal{A}_1^cG_{1,0}$, and this is achieved by decomposing $\mathcal{A}_1^c$ further. Indeed, let
\begin{equation*}
\mathcal{A}_2:=\{(\xi,\eta,\sigma)\in\mathcal{A}_1^c\colon\frac{1}{10}\abs{z_2}<\abs{z_1-z_2}+\abs{z_2-z_3}, \text{for some choice of } z_j=\xi,\eta,\sigma\}
\end{equation*}
and write
\begin{equation*}
\mathcal{A}_1^c=\mathcal{A}_2\cup\mathcal{A}_2^c.
\end{equation*}
It is straightforward to obtain estimates for $\mathcal{A}_2F_{1,0}$ and $\mathcal{A}_2G_{1,0}$.
\begin{lemma}
The integrals $\mathcal{A}_2F_{1,0}$, $\mathcal{A}_2G_{1,0}$ satisfy
\begin{equation*}
\abs{\mathcal{A}_2F_{1,0}}\lesssim \norm{u}_{H^2(\mathbb{R})}^2\norm{u}_{H^k(\mathbb{R})}^2,\quad \abs{\mathcal{A}_2G_{1,0}}\lesssim \norm{u}_{H^{2+\frac{\alpha}{2}}(\mathbb{R})}^2\norm{u}_{H^{k+\frac{\alpha}{2}}(\mathbb{R})}^2, \quad k\geq 2.
\end{equation*}
\end{lemma}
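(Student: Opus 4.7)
The plan is to mirror the proof of Lemma~\ref{a01-est}: on $\mathcal{A}_2$ enough transferability still holds, and the resulting integrals are bounded by Hölder together with Sobolev embedding. First I would split $\mathcal{A}_2=\mathcal{B}_\xi\cup\mathcal{B}_\eta\cup\mathcal{B}_\sigma$ according to which of $\xi,\eta,\sigma$ plays the role of $z_2$ in the defining inequality. A short triangle-inequality argument shows that on each piece
\begin{equation*}
\abs{\xi}+\abs{\eta}+\abs{\sigma}\lesssim\abs{\xi-\eta}+\abs{\eta-\sigma};
\end{equation*}
for instance on $\mathcal{B}_\sigma$ one has $\abs{\sigma}\lesssim\abs{\xi-\sigma}+\abs{\eta-\sigma}\leq\abs{\xi-\eta}+2\abs{\eta-\sigma}$, and then $\abs{\xi},\abs{\eta}$ inherit the bound via the triangle inequality.

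Since $\mathcal{A}_2\subseteq\mathcal{A}_1^c$ forces $\abs{\eta},\abs{\sigma}\geq 1$, Proposition~\ref{prop-m} applies in its high-frequency form, giving $\abs{m(\xi-\eta,\sigma)}\lesssim\abs{\xi-\eta}/\abs{\sigma}+\abs{\sigma}/\abs{\xi-\eta}$ and the analogue for $m(\xi-\eta,\eta)$. Substituting into $F_{1,0}$ and $G_{1,0}$, the singular denominators are cancelled by the explicit factors $\sigma^{k-1}$, $\eta$ and $\xi-\eta$; for $G_{1,0}$ the further bound $(1+\abs{\xi}^\alpha)/(1+\abs{\eta}^\alpha)\lesssim 1+\abs{\xi-\eta}^\alpha$ (valid since $\abs{\eta}\geq 1$) contributes at most $\abs{\xi-\eta}^\alpha$ of extra weight. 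After Plancherel each term becomes a quadrilinear integral $\int(\abs{\mathrm{D}}^{a_1}u)(\abs{\mathrm{D}}^{a_2}u)(\abs{\mathrm{D}}^{a_3}u)(\abs{\mathrm{D}}^{a_4}u)\,\mathrm{d}x$ which I would bound by Hölder with two factors in $L^2$ and two in $L^\infty$, controlled by the Sobolev embedding $\norm{\abs{\mathrm{D}}^ju}_{L^\infty}\lesssim\norm{u}_{H^{j+1}}$. The decisive allocation places the $L^\infty$-norms on $\hat u(\eta-\sigma)$ (weight $0$) and $\hat u(\sigma)$ (weight $k-1$), and the $L^2$-norms on $\hat u(\xi-\eta)$ (weight $\leq 2$) and $\hat u(\xi)$ (weight $k$), producing Sobolev indices $\{1,k,2,k\}\leq\{2,2,k,k\}$ and hence $\abs{\mathcal{A}_2 F_{1,0}}\lesssim\norm{u}_{H^2}^2\norm{u}_{H^k}^2$. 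When the $\abs{\sigma}^2$ or $\abs{\eta}^2$ piece of the weight appears and would spoil this allocation, one first transfers one power via $\abs{\sigma}\lesssim\abs{\xi-\eta}+\abs{\eta-\sigma}$ (respectively $\abs{\eta}\lesssim\abs{\xi-\eta}+\abs{\eta-\sigma}$) before invoking Hölder.

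For $G_{1,0}$ the only new ingredient is the symmetric splitting
\begin{equation*}
\abs{\xi-\eta}^\alpha\lesssim\abs{\xi-\eta}^{\alpha/2}\bigl(\abs{\xi}^{\alpha/2}+\abs{\eta-\sigma}^{\alpha/2}+\abs{\sigma}^{\alpha/2}\bigr),
\end{equation*}
obtained by combining $\abs{\xi-\eta}^{\alpha/2}\lesssim\abs{\xi}^{\alpha/2}+\abs{\eta}^{\alpha/2}$ with $\abs{\eta}^{\alpha/2}\lesssim\abs{\eta-\sigma}^{\alpha/2}+\abs{\sigma}^{\alpha/2}$. This places an extra $\alpha/2$ on two distinct factors, shifting two of the four Hölder indices up by $\alpha/2$; the resulting multiset fits under $\{2+\alpha/2,2+\alpha/2,k+\alpha/2,k+\alpha/2\}$, giving $\abs{\mathcal{A}_2 G_{1,0}}\lesssim\norm{u}_{H^{2+\alpha/2}}^2\norm{u}_{H^{k+\alpha/2}}^2$. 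The main obstacle is purely bookkeeping: in each of the subcases (choice of $\mathcal{B}_\zeta$, which term of the $m$-bound dominates, how the $\alpha$-weight is distributed) one has to verify that the Hölder indices indeed fit under the target multiset, and this is why the slightly stronger form of Proposition~\ref{prop-m} together with the transferability on $\mathcal{A}_2$ is exactly what is needed.
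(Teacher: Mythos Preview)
Your approach is correct and matches the paper's own proof, which is itself only a two-sentence sketch referring back to Lemma~\ref{a01-est}: the defining inequality of $\mathcal{A}_2$ yields the transferability $\abs{\xi}+\abs{\eta}+\abs{\sigma}\lesssim\abs{\xi-\eta}+\abs{\eta-\sigma}$, after which Proposition~\ref{prop-m} and straightforward H\"older/Sobolev estimates finish the job. Your write-up simply spells out the case analysis and the H\"older allocations that the paper leaves implicit.
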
 
\begin{proof}
The idea here is precisely the same as in the proof of Lemma \ref{a01-est}. The defining property of $\mathcal{A}_2$ allows us to transfer factors of $\xi,\eta,\sigma$ to $\xi-\eta$ or $\eta-\sigma$, and the desired estimates are then obtained using Proposition \ref{prop-m}.
\end{proof}
The final task is therefore to estimate $2\mathcal{A}_2^cF_{1,0}+\mathcal{A}_2^cG_{1,0}$, and we first note that
\begin{align*}
2\mathcal{A}_2^cF_{1,0}+\mathcal{A}_2^cG_{1,0}&=2\mathrm{i}\int_{\mathcal{A}_2^c}\left[m(\xi-\eta,\sigma)\sigma-\frac{1+\abs{\xi}^\alpha}{1+\abs{\eta}^\alpha}m(\xi-\eta,\eta)\eta\right](\sigma)^{k-1}(\xi-\eta)\xi^k\ \mathrm{d}\mathrm{Q}(u)\\
&=2\mathrm{i}\int_{\mathcal{A}_2^c}\left[m(\xi-\eta,\sigma)-\frac{1+\abs{\xi}^\alpha}{1+\abs{\eta}^\alpha}m(\xi-\eta,\eta)\right]\sigma^k(\xi-\eta)\xi^k\ \mathrm{d}\mathrm{Q}(u)\\
&\quad -2\mathrm{i}\int_{\mathcal{A}_2^c}\frac{1+\abs{\xi}^\alpha}{1+\abs{\eta}^\alpha}m(\xi-\eta,\eta)(\eta-\sigma)\sigma^{k-1}(\xi-\eta)\xi^k\ \mathrm{d}\mathrm{Q}(u),
\end{align*}
where the last integral can be estimated by $\norm{u}_{H^{2+\frac{\alpha}{2}}}^2\norm{u}_{H^{k+\frac{\alpha}{2}}}^2$, using proposition \ref{prop-m}. Hence,
\begin{equation*}
2\mathcal{A}_2^cF_{1,0}+\mathcal{A}_2^cG_{1,0}=2I+\mathcal{O}(\norm{u}_{H^{2+\frac{\alpha}{2}}}^2\norm{u}_{H^{k+\frac{\alpha}{2}}}^2),
\end{equation*}
where 
\begin{equation*}
I:=\mathrm{i}\int_{\mathcal{A}_2^c}\left[m(\xi-\eta,\sigma)-\frac{1+\abs{\xi}^\alpha}{1+\abs{\eta}^\alpha}m(\xi-\eta,\eta)\right]\sigma^k(\xi-\eta)\xi^k\ \mathrm{d}\mathrm{Q}(u).
\end{equation*}
In order to estimate $I$ we first discuss some properties of $\mathcal{A}_2^c$. By definition, elements $(\xi,\eta,\sigma)\in\mathcal{A}_2^c$ satisfy 
\begin{equation*}
\frac{1}{10}\geq  \abs{\frac{\xi}{\eta}-1}+\abs{1-\frac{\sigma}{\eta}},
\end{equation*}
which implies that $\text{sgn}(\xi)=\text{sgn}(\eta)=\text{sgn}(\sigma)$. This will allow us to integrate over $\mathcal{A}_{2,+}^c:=\{(\xi,\eta,\sigma)\in\mathcal{A}_2^c\ :\ \xi,\eta,\sigma\geq 1\}$ instead, since $I=2\mathcal{A}_{2,+}^cI$. Moreover, for elements $(\xi,\eta,\sigma)\in\mathcal{A}_{2,+}^c$ we have that
\begin{equation}\label{xi-sigma-eta-rel}
\xi=(1+\mu)\eta,\quad \sigma=(1+\nu)\eta,
\end{equation}
for $\abs{\mu},\abs{\nu}\leq \frac{1}{10}$. We are now ready to estimate $\mathcal{A}_{2,+}^cI$. 
\begin{lemma}\label{final-est}
The integral $\mathcal{A}_{2,+}^cI$ satisfies
\begin{equation*}
\abs{\mathcal{A}_{2,+}^cI}\lesssim \norm{u}_{H^{2+\frac{\alpha}{2}}(\mathbb{R})}^2\norm{u}_{H^{k+\frac{\alpha}{2}}(\mathbb{R})}^2,\quad k\geq 2
\end{equation*}
\end{lemma}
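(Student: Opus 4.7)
The plan is to extract the cancellation hidden in the bracket by writing it as $B_1+B_2$, where
\[
B_1 := m(\xi-\eta,\sigma)-m(\xi-\eta,\eta),\qquad B_2:=\frac{\abs{\eta}^\alpha-\abs{\xi}^\alpha}{1+\abs{\eta}^\alpha}\,m(\xi-\eta,\eta),
\]
and treating the two pieces separately. Throughout I will exploit that on $\mathcal{A}_{2,+}^c$, in view of \eqref{xi-sigma-eta-rel}, the variables $\xi,\eta,\sigma$ are positive, mutually comparable, bounded below by $1$, and satisfy $\abs{\xi-\eta},\abs{\eta-\sigma}\lesssim\eta$.

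For $B_2$, the mean value theorem applied to $s\mapsto s^\alpha$ between $\eta$ and $\xi$ yields $\bigl|\abs{\xi}^\alpha-\abs{\eta}^\alpha\bigr|\lesssim \abs{\xi-\eta}\,\eta^{\alpha-1}$, which combined with $1+\abs{\eta}^\alpha\gtrsim \eta^\alpha$ gives a factor of order $\abs{\xi-\eta}/\eta$ in $B_2$. Using the upper bound $\abs{m(\xi-\eta,\eta)}\lesssim \eta/\abs{\xi-\eta}$ from Proposition \ref{prop-m} (the dominant part when $\abs{\xi-\eta}\lesssim\eta$), one obtains $\abs{B_2}\lesssim 1$, so the $B_2$-contribution to $\mathcal{A}_{2,+}^cI$ is bounded by $\int_{\mathbb{R}^3}\abs{\xi-\eta}\abs{\sigma}^k\abs{\xi}^k\,\abs{\mathrm{d}\mathrm{Q}(u)}$.

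For $B_1$, I write
\[
B_1=(\sigma-\eta)\int_0^1(\partial_\tau m)(\xi-\eta,\eta+t(\sigma-\eta))\,\mathrm{d}t
\]
and establish the pointwise estimate $\abs{\partial_\tau m(\rho,\tau)}\lesssim 1/\abs{\rho}$ whenever $\abs{\rho}\lesssim \tau$ and $\tau\gtrsim 1$. This bound is obtained by passing to the polar parameterization $\rho=r\cos\phi$, $\tau=r\sin\phi$ from the proof of Proposition \ref{prop-m} and computing $\partial_\tau\log m$ of the factorization $m=r^{2-\alpha}(1+\abs{\rho}^\alpha)(1+\abs{\tau}^\alpha)/(2\rho\tau\,h(r,\phi))$: the ostensibly dangerous contributions of order $1/\tau$ cancel, leaving $\abs{\partial_\tau m}\lesssim \abs{m}/\tau\lesssim 1/\abs{\rho}$. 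Consequently $\abs{B_1}\lesssim\abs{\sigma-\eta}/\abs{\xi-\eta}$, and the $(\xi-\eta)$ factor in the integrand of $I$ cancels this denominator, leaving $\int_{\mathbb{R}^3}\abs{\sigma-\eta}\abs{\sigma}^k\abs{\xi}^k\,\abs{\mathrm{d}\mathrm{Q}(u)}$ as the $B_1$-bound.

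To close, I set $v:=\mathcal{F}^{-1}\abs{\widehat{u}}$, which satisfies $\norm{v}_{H^s}=\norm{u}_{H^s}$ for every $s\geq 0$. By Parseval each of the two remaining frequency integrals is identified with a position-space integral of the form $\int_\mathbb{R}\abs{\partial_xv}\abs{v}\abs{\partial_x^kv}^2\,\mathrm{d}x$, which by Hölder's inequality and the embedding $H^2(\mathbb{R})\hookrightarrow W^{1,\infty}(\mathbb{R})$ is bounded by $\norm{u}_{H^2}^2\norm{u}_{H^k}^2\leq \norm{u}_{H^{2+\alpha/2}}^2\norm{u}_{H^{k+\alpha/2}}^2$. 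The main obstacle is the rigorous derivation of the bound $\abs{\partial_\tau m(\rho,\tau)}\lesssim 1/\abs{\rho}$: the explicit formula \eqref{formula-m} is algebraically unwieldy and the cancellation of the leading $1/\tau$ contributions in $\partial_\tau\log m$ must be tracked carefully, which is most cleanly done in the polar factorization of Proposition \ref{prop-m}.
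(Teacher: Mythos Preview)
Your approach is correct and genuinely different from the paper's. The paper does not split the bracket; instead it writes
\[
N(\xi,\eta,\sigma)=m(\xi-\eta,\sigma)-\tfrac{1+|\xi|^\alpha}{1+|\eta|^\alpha}m(\xi-\eta,\eta)
\]
as a single fraction with a common denominator, substitutes $\xi=(1+\mu)\eta$, $\sigma=(1+\nu)\eta$, and expands the resulting numerator $\tilde N$ in powers of $\mu,\nu$ to obtain $\tilde N=\eta^{2+\alpha}(1+\eta^\alpha)^2\,\mathcal{O}(\mu^2+\mu\nu)$. This yields $|N|\lesssim\eta^\alpha\bigl(1+|\eta-\sigma|/|\xi-\eta|\bigr)$, which after multiplying by $|\xi-\eta|\sigma^k\xi^k$ and splitting $\eta^\alpha=\eta^{\alpha/2}\eta^{\alpha/2}$ onto the two high-frequency factors produces exactly the $H^{2+\alpha/2}$ and $H^{k+\alpha/2}$ norms. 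Your $B_1+B_2$ decomposition with two applications of the mean value theorem gives instead $|N|\lesssim 1+|\eta-\sigma|/|\xi-\eta|$, hence the sharper bound $\norm{u}_{H^2}^2\norm{u}_{H^k}^2$; this is a genuine gain over the paper's estimate, and the final Parseval/H\"older step with $v=\mathcal{F}^{-1}|\hat u|$ closes cleanly.

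Two remarks on the key technical step. First, the phrase ``the ostensibly dangerous contributions of order $1/\tau$ cancel'' is a bit misleading: no cancellation is needed. Each summand in $\partial_\tau\log m$ (coming from $\log(\rho+\tau)$, $\log(1+\tau^\alpha)$, $(2-\alpha)\log r$, $-\log\tau$, and $-\log|h|$) is already individually $O(1/\tau)$ in the regime $|\rho|\lesssim\tau$, $\tau\gtrsim 1$, so $|\partial_\tau m|\lesssim|m|/\tau\lesssim 1/|\rho|$ follows by summation. Second, the polar factorization is actually \emph{not} the smoothest route: $h(r,\theta)$ is only H\"older-$\alpha$ in $\theta$ near $\theta=\pi/2$ (because $|\cos\theta|^\alpha$ appears in $\tilde n$), so $\partial_\theta h$ blows up like $|\cos\theta|^{\alpha-1}$. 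This is compensated by $\partial_\tau\theta=\rho/r^2=\cos\theta/r$, so the product is still $O(1/\tau)$, but one must notice this. A cleaner alternative is to differentiate the explicit formula \eqref{formula-m} in $\tau$ directly: the only potentially singular pieces are $|\rho|^\alpha$ (untouched by $\partial_\tau$) and $|\rho+\tau|^\alpha$ (whose $\tau$-derivative is $\simeq\tau^{\alpha-1}$ here), so the bound follows by inspection of the quotient rule.
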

\begin{proof}
We have that
\begin{equation*}
\mathcal{A}_{2,+}^cI=\mathrm{i}\int_{\mathcal{A}_{2,+}^c}\left[m(\xi-\eta,\sigma)-\frac{1+\abs{\xi}^\alpha}{1+\abs{\eta}^\alpha}m(\xi-\eta,\eta)\right]\sigma^k(\xi-\eta)\xi^k\ \mathrm{d}\mathrm{Q}(u),
\end{equation*}
and
\begin{align*}
N(\xi,\eta,\sigma):&=\left[m(\xi-\eta,\sigma)-\frac{1+\abs{\xi}^\alpha}{1+\abs{\eta}^\alpha}m(\xi-\eta,\eta)\right]\\
&=\frac{m(\xi-\eta,\sigma)m(\xi-\eta,\eta)}{\xi(\xi-\eta+\sigma)(1+\abs{\xi}^\alpha)(1+\abs{\sigma}^\alpha)(1+\abs{\xi-\eta}^\alpha)}\tilde{N}(\xi,\eta,\sigma),
\end{align*}
where
\begin{align*}
\tilde{N}(\xi,\eta,\sigma):&=(\xi-\eta+\sigma)(1+\abs{\sigma}^\alpha)\left[\xi(1+\abs{\eta}^\alpha)(\abs{\xi-\eta}^\alpha\abs{\xi}^\alpha)-\eta(1+\abs{\xi}^\alpha)(\abs{\xi-\eta}^\alpha-\abs{\eta}^\alpha)\right]\\
&\quad-\xi(1+\abs{\xi}^\alpha)\left[(\xi-\eta+\sigma)(1+\abs{\sigma}^\alpha)(\abs{\xi-\eta}^\alpha-\abs{\xi-\eta+\sigma}^\alpha)\right.\\
&\left.\quad-\sigma(1+\abs{\xi-\eta+\sigma}^\alpha)(\abs{\xi-\eta}^\alpha-\abs{\sigma}^\alpha)\right]
\end{align*}
In $\mathcal{A}_{2,+}^c$ we have, due to \eqref{xi-sigma-eta-rel}, that
\begin{equation}\label{A2+-prop}
\xi\simeq\eta\simeq\sigma\simeq\xi-\eta+\sigma\gtrsim 1.
\end{equation}
Using \eqref{A2+-prop} together with \eqref{est-1}, we find that
\begin{align}
&\frac{\abs{m(\xi-\eta,\sigma)m(\xi-\eta,\eta)}}{\xi(\xi-\eta+\sigma)(1+\abs{\xi}^\alpha)(1+\abs{\sigma}^\alpha)(1+\abs{\xi-\eta}^\alpha)}\nonumber\\
&\lesssim \frac{\left(\frac{\abs{\xi-\eta}}{\sigma}+\frac{\sigma}{\abs{\xi-\eta}}\right)\left(\frac{\xi-\eta}{\eta}+\frac{\eta}{\abs{\xi-\eta}}\right)}{\xi(\xi-\eta+\sigma)(1+\abs{\xi}^\alpha)(1+\abs{\sigma}^\alpha)(1+\abs{\xi-\eta}^\alpha)}\nonumber\\
&\lesssim \frac{\left(\frac{\abs{\xi-\eta}^2}{\xi^4}+\frac{1}{\xi^2}+\frac{1}{\abs{\xi-\eta}^2}\right)}{(1+\abs{\xi}^\alpha)(1+\abs{\sigma}^\alpha)(1+\abs{\xi-\eta}^\alpha)}.\label{firstN-est}
\end{align}
Next, using \eqref{xi-sigma-eta-rel} and expanding in Taylor series, we find that
\begin{align*}
\tilde{N}(\xi,\eta,\sigma)&=(1+\nu+\mu)\eta(1+(1+\nu)^\alpha\eta^\alpha)\big[(1+\mu)\eta(1+\eta^\alpha)(\mu^\alpha\eta^\alpha-(1+\mu)^\alpha\eta^\alpha)\\
&\quad-\eta(1+(1+\mu)^\alpha\eta^\alpha)(\mu^\alpha\eta^\alpha-\eta^\alpha)\big]-(1+\mu)\eta(1+(1+\mu)^\alpha\eta^\alpha)\\
&\quad\times\big[(1+\nu+\mu)\eta(1+(1+\nu)^\alpha\eta^\alpha)(\mu^\alpha\eta^\alpha-(1+\nu+\mu)^\alpha\eta^\alpha)\\
&\quad -(1+\nu)\eta(1+(1+\nu+\mu)^\alpha\eta^\alpha)(\mu^\alpha\eta^\alpha-(1+\nu)^\alpha\eta^\alpha)\big]\\
&=\eta^{2+\alpha}(1+\nu+\mu)(1+(1+\nu)^\alpha\eta^\alpha)\big[\mu(\mu^\alpha-1-\alpha)+\eta^\alpha\mu(\mu^\alpha-1-\alpha\mu^\alpha)\big]\\
&\quad-\eta^{2+\alpha}(1+\mu)(1+(1+\mu)^\alpha\eta^\alpha)\big[\mu(\mu^\alpha-(1+\alpha)(1+\nu)^\alpha)\\
&\quad+\eta^\alpha\mu(\mu^\alpha-(1+\alpha\mu^\alpha)(1+\nu)^\alpha)\big]+\eta^{2+\alpha}(1+\eta^\alpha)^2\mathcal{O}(\mu^2)\\
&=\eta^{2+\alpha}(1+\eta^\alpha)\big[\mu(\mu^\alpha-1-\alpha)+\eta^\alpha\mu(\mu^\alpha-1-\alpha\mu^\alpha)-\mu(\mu^\alpha-(1+\alpha)(1+\nu)^\alpha)\\
&\quad-\eta^\alpha\mu(\mu^\alpha-(1+\alpha\mu^\alpha)(1+\nu)^\alpha)\big]+\eta^{2+\alpha}(1+\eta^\alpha)^2\mathcal{O}(\mu^2+\mu\nu)\\
&=\eta^{2+\alpha}(1+\eta^\alpha)^2\mathcal{O}(\mu^2+\mu\nu).
\end{align*}
Hence,
\begin{equation}\label{tildeN-est}
\abs{\tilde{N}(\xi,\eta,\sigma)}\lesssim \eta^{2+\alpha}(1+\eta^\alpha)^2\abs{\mu}(\abs{\mu}+\abs{\nu})=\eta^\alpha(1+\eta^\alpha)^2\abs{\xi-\eta}(\abs{\xi-\eta}+\abs{\eta-\sigma}).
\end{equation}
Using \eqref{firstN-est} together with \eqref{tildeN-est} we immediately get that
\begin{align*}
\abs{N(\xi,\eta,\sigma)}&\lesssim \eta^\alpha \abs{\xi-\eta}(\abs{\xi-\eta}+\abs{\eta-\sigma})\left(\frac{\abs{\xi-\eta}^2}{\xi^4}+\frac{1}{\xi^2}+\frac{1}{\abs{\xi-\eta}^2}\right)\\
&\leq \eta^\alpha\left(\frac{\abs{\mu}^3(\abs{\mu}+\abs{\nu})}{(1+\mu)^4}+\frac{\abs{\mu}(\abs{\mu}+\abs{\nu})}{(1+\mu)^2}+1+\frac{\abs{\eta-\sigma}}{\abs{\xi-\eta}}\right)\\
&\lesssim \eta^\alpha\left(1+\frac{\abs{\eta-\sigma}}{\abs{\xi-\eta}}\right) 
\end{align*}
from which it follows that
\begin{align*}
\abs{\mathcal{A}_{2,+}^cI}&\lesssim\int_{\mathcal{A}_{2,+}^c}\eta^\alpha\left(1+\frac{\abs{\eta-\sigma}}{\abs{\xi-\eta}}\right)\abs{\xi-\eta}\sigma^k\xi^k\ \mathrm{d}\mathrm{Q}(u)\\
&\lesssim \norm{u}_{H^{2+\frac{\alpha}{2}}(\mathbb{R})}^2\norm{u}_{H^{k+\frac{\alpha}{2}}(\mathbb{R})}^2.
\end{align*}
\end{proof}

Finally we estimate $G_k$.
\begin{lemma}\label{G_k-est}
The integral $G_k$ satisfies
\begin{equation*}
\abs{G_k}\lesssim \norm{u}_{H^{2+\frac{\alpha}{2}}(\mathbb{R})}^2\norm{u}_{H^{k+\frac{\alpha}{2}}(\mathbb{R})}^2,\quad k\geq 2
\end{equation*}
\end{lemma}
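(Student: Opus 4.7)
The approach mirrors the analysis of $F_{1,0}$ and $G_{1,0}$ carried out in Lemmas \ref{a01-est}--\ref{final-est}, but is in fact somewhat more direct in the case of $G_k$. The $k$ derivatives in $\partial_x^k u$ produce the factors $(\xi-\eta)^k$ and $\xi^k$ that land naturally on $\hat u(\xi-\eta)$ and $\hat u(\xi)$, matching the target Sobolev index $k+\frac{\alpha}{2}$, so no commutator cancellation along the lines of Lemma \ref{final-est} is required.

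First, I would express $G_k$ as a triple integral over $(\xi,\eta,\sigma)$ by expanding $\FF(u^2)(\xi-\eta)=\int\hat u(\xi-\eta-\sigma)\hat u(\sigma)\,\mathrm{d}\sigma$, and apply the change of variables $\xi\leftrightarrow\eta$ together with the functional relation \eqref{functional-rel} (exactly as in the derivation of $G_{1,0}$) to obtain
\begin{equation*}
G_k=-\int_{\mathbb{R}^3}\frac{1+\abs{\xi}^\alpha}{1+\abs{\eta}^\alpha}\,m(\xi-\eta,\eta)\,\mathrm{i}\eta\,(\mathrm{i}(\xi-\eta))^k\,\overline{(\mathrm{i}\xi)^k}\,\mathrm{d}\mathrm{Q}(u).
\end{equation*}
The $\alpha$-weighted ratio is controlled by $(1+\abs{\xi}^\alpha)/(1+\abs{\eta}^\alpha)\lesssim 1+\abs{\xi-\eta}^\alpha$ via subadditivity of $t\mapsto t^\alpha$.

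Next, I would decompose $\mathbb{R}^3=\mathcal{A}_1\cup\mathcal{A}_2\cup\mathcal{A}_2^c$ exactly as in Section \ref{sec-en-est}. On $\mathcal{A}_1\cup\mathcal{A}_2$, the defining property allows the remaining $\mathrm{i}\eta$-factor (and the $\abs{\xi-\eta}^\alpha$-weight) to be transferred onto the convolution variables via $\abs{\eta}\leq\abs{\eta-\sigma}+\abs{\sigma}$, and Proposition \ref{prop-m} combined with Plancherel and H\"older's inequality then yields the desired bound, placing the low-order factors in $L^\infty$ via $\norm{f}_{L^\infty}\lesssim\norm{f}_{H^{1/2+\delta}}$. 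On $\mathcal{A}_2^c$, where $\xi\simeq\eta\simeq\sigma$, the $\alpha$-ratio is $O(1)$ and Proposition \ref{prop-m} gives $\abs{m}\,\abs{\eta}\lesssim\abs{\xi-\eta}+\abs{\eta}^2/\abs{\xi-\eta}$; since $\abs{\xi-\eta}\ll\abs{\eta}$ in this region, the dominant contribution to the symbol is $\abs{\eta}^2\abs{\xi-\eta}^{k-1}\abs{\xi}^k(1+\abs{\xi-\eta}^\alpha)$, and transferring the extra $\abs{\eta}^2$-weight onto one of the low-frequency variables via $\abs{\eta}\leq\abs{\eta-\sigma}+\abs{\sigma}$ reduces the analysis to a physical-space integral of the schematic form $\int\partial_x^2u\cdot u\cdot\abs{\mathrm{D}}^{k-1+\alpha}u\cdot\partial_x^ku\,\mathrm{d}x$ which H\"older's inequality with the split $L^2\cdot L^\infty\cdot L^\infty\cdot L^2$ bounds by $\norm{u}_{H^{2+\frac{\alpha}{2}}}^2\norm{u}_{H^{k+\frac{\alpha}{2}}}^2$.

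The main obstacle will be verifying that the Sobolev embedding indices for the two $L^\infty$-bounded factors fit inside $H^{2+\frac{\alpha}{2}}$ and $H^{k+\frac{\alpha}{2}}$ uniformly for $0<\alpha<1$. This works because $\alpha<1$ leaves room to choose $\delta\in(0,\frac{1}{2}-\frac{\alpha}{2})$ in the embedding $\norm{\abs{\mathrm{D}}^{k-1+\alpha}u}_{L^\infty}\lesssim\norm{u}_{H^{k-\frac{1}{2}+\alpha+\delta}}$, so that the resulting Sobolev index satisfies $k-\frac{1}{2}+\alpha+\delta\leq k+\frac{\alpha}{2}$, giving the claimed estimate.
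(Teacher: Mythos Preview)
Your Fourier representation of $G_k$ after the relabelling $\eta\leftrightarrow\xi-\eta$ is correct, and your analysis on $\mathcal{A}_2^c$ is essentially fine. The gap is on $\mathcal{A}_1\cup\mathcal{A}_2$. In your formula the two $k$-th order weights sit on $\hat u(\xi-\eta)$ and $\hat u(\xi)$, while the convolution pair $\hat u(\eta-\sigma),\hat u(\sigma)$ carries none. When you invoke Proposition~\ref{prop-m}, the contribution $|m(\xi-\eta,\eta)|\,|\eta|\gtrsim |\xi-\eta|$ (dominant when $|\eta|\lesssim|\xi-\eta|$) produces the symbol $|\xi-\eta|^{k+1}|\xi|^k$ times the $\alpha$-ratio, i.e.\ $2k+1$ derivatives concentrated on the pair $(\xi-\eta,\xi)$. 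This exceeds the $2(k+\tfrac{\alpha}{2})=2k+\alpha$ that the target bound can absorb, and neither your triangle inequality $|\eta|\le|\eta-\sigma|+|\sigma|$ nor the $\mathcal{A}_1/\mathcal{A}_2$ transfer property helps: those devices move weight \emph{onto} the difference variables $\xi-\eta,\eta-\sigma$, whereas here the overload is already on $\xi-\eta$. A concrete configuration such as $|\sigma|<1$, $|\eta|\sim 1$, $|\xi-\eta|\simeq|\xi|\sim M\to\infty$ lies in $\mathcal{A}_1$ and exhibits exactly this loss of one derivative.

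The paper's proof closes this gap by a genuine cancellation rather than a decomposition: one writes $\mathrm{i}\xi=\mathrm{i}(\xi-\eta)+\mathrm{i}\eta$ to split $G_k$, then uses the functional relation \eqref{functional-rel} (together with the swap $\xi\leftrightarrow\eta$) to show that the piece carrying $(\mathrm{i}\eta)^{k+1}$ equals $-G_k$. This yields
\[
G_k=\tfrac{1}{2}\int_{\mathbb{R}^2}(1+|\xi|^\alpha)(1+|\xi-\eta|^\alpha)^{-1}m(\xi-\eta,\eta)(\mathrm{i}\eta)^k\hat u(\eta)(\mathrm{i}(\xi-\eta))^2\mathcal{F}(u^2)(\xi-\eta)\overline{(\mathrm{i}\xi)^{k-1}\hat u(\xi)}\,\mathrm{d}\eta\,\mathrm{d}\xi,
\]
in which one high-frequency derivative has been traded from $\xi$ to $\xi-\eta$ (the $u^2$ slot). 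Now the high-frequency count is $k+(k-1)=2k-1$, and the extra derivative sits on the low-order factor, so the estimate via \eqref{est-xi} goes through. Your outline skips precisely this symmetrization step, and without it the bound on $\mathcal{A}_1\cup\mathcal{A}_2$ cannot be closed.
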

\begin{proof}
First note that
\begin{align}
G_k&=-\langle \partial_x^ku,(1+\mathrm{D}^\alpha)P((1+\mathrm{D}^\alpha)^{-1}\partial_x(u^2),\partial_x^ku)\rangle\nonumber\\
&=-\int_{\mathbb{R}^2}(1+\abs{\xi}^\alpha)m(\xi-\eta,\eta)(\mathrm{i}\eta)^k\hat{u}(\eta)(1+\abs{\xi-\eta}^\alpha)^{-1}\mathrm{i}(\xi-\eta)\mathcal{F}(u^2)(\xi-\eta)\overline{(\mathrm{i}\xi)^k\hat{u}(\xi)}\ \mathrm{d}\eta\mathrm{d}\xi\nonumber\\
&=\int_{\mathbb{R}^2}(1+\abs{\xi}^\alpha)m(\xi-\eta,\eta)(\mathrm{i}\eta)^k\hat{u}(\eta)(1+\abs{\xi-\eta}^\alpha)^{-1}(\mathrm{i}(\xi-\eta))^2\mathcal{F}(u^2)(\xi-\eta)\overline{(\mathrm{i}\xi)^{k-1}\hat{u}(\xi)}\ \mathrm{d}\eta\mathrm{d}\xi\nonumber\\
&\quad+\int_{\mathbb{R}^2}(1+\abs{\xi}^\alpha)m(\xi-\eta,\eta)(\mathrm{i}\eta)^{k+1}\hat{u}(\eta)(1+\abs{\xi-\eta}^\alpha)^{-1}\mathrm{i}(\xi-\eta)\mathcal{F}(u^2)(\xi-\eta)\overline{(\mathrm{i}\xi)^{k-1}\hat{u}(\xi)}\ \mathrm{d}\eta\mathrm{d}\xi,\label{G_k-decomp}
\end{align}
and using \eqref{functional-rel}, we have that
\begin{align*}
&\int_{\mathbb{R}^2}(1+\abs{\xi}^\alpha)m(\xi-\eta,\eta)(\mathrm{i}\eta)^{k+1}\hat{u}(\eta)(1+\abs{\xi-\eta}^\alpha)^{-1}\mathrm{i}(\xi-\eta)\mathcal{F}(u^2)(\xi-\eta)\overline{(\mathrm{i}\xi)^{k-1}\hat{u}(\xi)}\ \mathrm{d}\eta\mathrm{d}\xi\\
&=\int_{\mathbb{R}^2}(1+\abs{\eta}^\alpha)m(\eta-\xi,\xi)(\mathrm{i}\xi)^{k+1}\hat{u}(\xi)(1+\abs{\xi-\eta}^\alpha)^{-1}\mathrm{i}(\eta-\xi)\mathcal{F}(u^2)(\eta-\xi)\overline{(\mathrm{i}\eta)^{k-1}\hat{u}(\eta)}\ \mathrm{d}\eta\mathrm{d}\xi\\
&=-\int_{\mathbb{R}^2}(1+\abs{\xi}^\alpha)m(\xi-\eta,\eta)\mathrm{i}\eta(\mathrm{i}\xi)^k\hat{u}(\xi)(1+\abs{\xi-\eta}^\alpha)^{-1}\mathrm{i}(\eta-\xi)\mathcal{F}(u^2)(\eta-\xi)\overline{(\mathrm{i}\eta)^{k-1}\hat{u}(\eta)}\ \mathrm{d}\eta\mathrm{d}\xi\\
&=\int_{\mathbb{R}^2}(1+\abs{\xi}^\alpha)m(\xi-\eta,\eta)(\mathrm{i}\eta)^k\hat{u}(\eta)(1+\abs{\xi-\eta}^\alpha)^{-1}\mathrm{i}(\xi-\eta)\mathcal{F}(u^2)(\xi-\eta)\overline{(\mathrm{i}\xi)^k\hat{u}(\xi)}\ \mathrm{d}\eta\mathrm{d}\xi\\
&=-G_k.
\end{align*}
From \eqref{G_k-decomp} we then get that
\begin{equation*}
G_k=\frac{1}{2}\int_{\mathbb{R}^2}(1+\abs{\xi}^\alpha)m(\xi-\eta,\eta)(\mathrm{i}\eta)^k\hat{u}(\eta)(1+\abs{\xi-\eta}^\alpha)^{-1}(\mathrm{i}(\xi-\eta))^2\mathcal{F}(u^2)(\xi-\eta)\overline{(\mathrm{i}\xi)^{k-1}\hat{u}(\xi)}\ \mathrm{d}\eta\mathrm{d}\xi,
\end{equation*}
and using \eqref{est-xi} together with arguments similar to those used in the proof Lemma \ref{F_1-G_1}, it is easy to see that the absolute value of above integral is bounded above by $\norm{u}_{H^{2+\frac{\alpha}{2}}}^2\norm{u}_{H^{k+\frac{\alpha}{2}}(\mathbb{R})}^2$.
\end{proof}

Proposition \ref{en-estimates} now follows by combining Lemmata \ref{energy-first-decomp}--\ref{G_k-est}. 

\normalsize
\bibliographystyle{plain}

\begin{thebibliography}{10}

\bibitem{bbm1972}
T.~B. Benjamin, J.~L. Bona, and J.~J. Mahony.
\newblock {Model equations for long waves in nonlinear dispersive systems}.
\newblock {\em Philos. Trans. Roy. Soc. London Ser. A}, 272(1220):47--78, 1972.

\bibitem{Bona2009}
J.~L. Bona and H.~Chen.
\newblock {Well-posedness for regularized nonlinear dispersive wave equations}.
\newblock {\em Discret. Contin. Dyn. Syst.}, 23(4):1253--1275, 2009.

\bibitem{bonatzvetkov2009}
J.~L. Bona and N.~Tzvetkov.
\newblock {Sharp Well-posedness Results for the BBM Equation}.
\newblock {\em Discret. Contin. Dyn. Syst.}, 23(4):1241--1252, 2009.

\bibitem{CP2014}
X.~Carvajal and M.~Panthee.
\newblock {On ill-posedness for the generalized BBM equation}.
\newblock {\em Discret. Contin. Dyn. Syst.}, 34(11):4565--4576, 2014.

\bibitem{EW2018}
M.~Ehrnstr{\"{o}}m and Y.~Wang.
\newblock {Enhanced existence time of solutions to the fractional Korteweg de
  Vries equation}.
\newblock {\em arXiv:1804.06297}, 2018.

\bibitem{Harrop-Griffiths2017}
Benjamin Harrop-Griffiths, Mihaela Ifrim, and Daniel Tataru.
\newblock {Finite Depth Gravity Water Waves in Holomorphic Coordinates}.
\newblock {\em Ann. PDE}, 3(1):4, 2017.

\bibitem{HITW2015}
J.~K. Hunter, M.~Ifrim, D.~Tataru, and T.~K. Wong.
\newblock {Long time solutions for a Burgers-Hilbert equation via a modified
  energy method}.
\newblock {\em Proc. Am. Math. Soc.}, 143(8):3407--3412, 2015.

\bibitem{Hunter2016}
John~K. Hunter, Mihaela Ifrim, and Daniel Tataru.
\newblock {\em {Two Dimensional Water Waves in Holomorphic Coordinates}},
  volume 346.
\newblock 2016.

\bibitem{Ifrim2017}
Mihaela Ifrim and Daniel Tataru.
\newblock {The Lifespan of Small Data Solutions in Two Dimensional Capillary
  Water Waves}.
\newblock {\em Arch. Ration. Mech. Anal.}, 225(3):1279--1346, 2017.

\bibitem{bonakalish2000}
H.~Kalisch and J.~L. Bona.
\newblock {Models for internal waves in deep water}.
\newblock {\em Discret. Contin. Dynam. Syst.}, 6(1):1--20, 2000.

\bibitem{Kwon2018}
S.~Kwon, T.~Oh, and H.~Yoon.
\newblock {Normal form approach to unconditional well-posedness of nonlinear
  dispersive PDEs on the real line}.
\newblock {\em arXiv:1805.08410}, pages 1--59, 2018.

\bibitem{LPS2014}
J-C. Saut, F.~Linares, and D.~Pilod.
\newblock {Dispersive perturbations and hyperbolic equations I: Local theory}.
\newblock {\em Siam J. Math. Anal.}, 46(2):1505--1537, 2014.

\bibitem{Wang2018}
M.~Wang and Z.~Zhang.
\newblock {Sharp global well-posedness for the fractional BBM equation}.
\newblock {\em Math. Methods Appl. Sci.}, 41:5906--5918, 2018.

\end{thebibliography}

\end{document}